\definecolor{darkred}{rgb}{0.4,0,0}
\definecolor{darkgreen}{rgb}{0,0.5,0}
\definecolor{darkblue}{rgb}{0,0,0.4}
\let\oldtocsection=\tocsection
\let\oldtocsubsection=\tocsubsection
\let\oldtocsubsubsection=\tocsubsubsection
\renewcommand{\tocsection}[2]{\hspace{0em}\oldtocsection{#1}{#2}}
\renewcommand{\tocsubsection}[2]{\hspace{1.75em}\oldtocsubsection{#1}{#2}}
\renewcommand{\tocsubsubsection}[2]{\hspace{2em}\oldtocsubsubsection{#1}{#2}}
\newcommand{\bR}{{\mathbb R}}
\newcommand{\cM}{{\mathcal M}}
\newcommand{\cH}{{\mathcal H}}
\newcommand{\norm}[1]{\left\|#1\right\|}
\newcommand{\SL}{\operatorname{SL}}
\newcommand{\onto}{{\twoheadrightarrow}}
\numberwithin{equation}{section}
\newcommand{\qand}{\quad\text{and}\quad}
\theoremstyle{plain}
\newtheorem{maintheorem}{Theorem}
\newtheorem{maincorollary}[maintheorem]{Corollary}
\newtheorem{theorem}{Theorem}[section]
\newtheorem{lemma}[theorem]{Lemma}
\theoremstyle{definition}
\newtheorem{remark}[theorem]{Remark}
\newcommand{\ang}[1]{\operatorname{angle\,}(#1)}
\newcommand{\RR}{{\mathbb R}}
\newcommand{\CC}{{\mathbb C}}
\newcommand{\NN}{{\mathbb N}}
\newcommand{\ZZ}{{\mathbb Z}}
\newcommand{\vfi}{\varphi}
\newcommand{\diam}{\operatorname{diam}}
\renewcommand{\epsilon}{\varepsilon}
\newcommand{\dist}{\operatorname{dist}}
\newcommand{\cP}{\EuScript{P}}
\newcommand{\cV}{\EuScript{V}}
\title{On H\"older-continuity of Oseledets subspaces}
\author{V\'{\i}tor Ara\'ujo}
\address{ 
\parbox{0.9\textwidth}{
V\'\i tor Ara\'ujo\\
 Instituto de Matem\'a\-tica,
  Universidade Federal da Bahia,\\
Av. Ademar de Barros s/n, 40170-110 Salvador, Brazil.}}
\email{vitor.araujo.im.ufba@gmail.com,
  vitor.d.araujo@ufba.br}
\urladdr{www.sd.mat.ufba.br/$\sim$vitor.d.araujo}
\author{Alexander I. Bufetov}
\address{
\parbox{0.9\textwidth}{
Alexander I. Bufetov\\
Aix-Marseille University, Centrale Marseille, CNRS, I2M, UMR
7373, 39\\
Rue Joliot Curie 13453, Marseille\\
France
\\
and
\\
The Steklov Institute of Mathematics, 119991 Moscow,
Russia\\
and\\
The Institute for Information Transmission Problems, 127051
Moscow. Russia\\
and\\
National Research University Higher School of Economics,
101000 Moscow. Russia}
}
\email{bufetov@mi.ras.ru,
alexander.bufetov@univ-amu.fr}
\author{Simion Filip}
\address{
\parbox{0.9\textwidth}{
Simion Filip\\
Department of Mathematics,
University of Chicago\\
5734 S. University Avenue, Chicago IL, 60637, USA}
}
\email{sfilip@math.uchicago.edu}
\date{\today}
\begin{document}

\subjclass{
37D25,
37A50, 37B40, 37C40}

\renewcommand{\subjclassname}{\textup{2000} Mathematics Subject Classification}


\begin{abstract} For H\"older cocycles over a Lipschitz base
  transformation, possibly non-invertible, we show that the
  subbundles given by the Oseledets Theorem are
  H\"older-continuous on compact sets of measure arbitrarily
  close to $1$.  The results extend to vector bundle
  automorphisms, as well as to the Kontsevich-Zorich cocycle
  over the Teichm{\"u}ller flow on the moduli space of
  abelian differentials.  Following a recent result of
  Chaika-Eskin, our results also extend to any given
  Teichm\"uller disk.
\end{abstract}

\maketitle

\tableofcontents

\section{Introduction}
\label{sec:introd}

The Oseledets Multiplicative Ergodic Theorem
\cite{Os68} provides the theoretical background for
computation of Lyapunov exponents of a nonlinear dynamical
system. These exponents define subspaces of vectors having
the same exponential rate of growth under the action of a
\emph{typical} family of linear maps generated by the orbits
of a measure preserving system; typical here meaning almost
every point, see below for the precise statements for discrete
dynamical systems and flows.

The dependence of the exponents and the corresponding
subspaces on the orbit is measurable in general; strong
forms of dependence being rarely studied. For the special
case of an ergodic partially hyperbolic probability measure
preserved by a $C^{1+\alpha}$ diffeomorphism of a compact
manifold, it is known that the subspace given by the direct
sum of the Oseledets subspaces corresponding to strictly
negative Lyapunov exponents depend H\"older-continuously on
the orbit chosen; see Brin~\cite{brin-app} and \cite[Chpt.5,
Section 3]{BarPes2007}.

Here we show that all Oseledets subspaces depend
H\"older-continuously on the points of hyperbolic
blocks (or \emph{regular sets}) of the system, which can be
chosen to be compact sets of measure arbitrarily close to
$1$. We present our results for cocycles generated by
discrete maps (both invertible and non-invertible) and also
for vector bundle automorphisms covering flows, under some
regularity conditions on the underlying dynamics.

In addition, we show that our results apply to the
Kontsevich-Zorich cocycle over the Teichm{\"u}ller flow on
the moduli space of abelian differentials.  Following a
recent result of Chaika-Eskin \cite{Chaika-Eskin}, our
results also extend to any given Teichm\"uller disk.

We begin, in the following subsections, by stating precisely
our results in the setting of the Multiplicative Ergodic
Theorem for cocycles over measure preserving Lipschitz maps;
then for cocycles over measure preserving Lipschitz flows
(Subsections 1.1 and 1.2); and finally stating our results
in the setting of Kontsevich-Zorich cocycle over the
Teichm{\"u}ller flow (Subsection 1.3).

For the proof, we present some preliminary results in
Section \ref{sec:prelim-results-defin} together with the
statements of the main technical lemmas. Then in
Section~\ref{sec:holder-contin-oseled} we prove the
sufficient conditions for the Oseledets subspaces to depend
H\"older continuously on points of hyperbolic
blocks. Section~\ref{sec:lemmata} is reserved to the
detailed proofs of the technical lemmas and we finally apply
our results to the Kontsevich-Zorich cocycle in
Section~\ref{sec:KZ}.

\subsection{Multiplicative ergodic theorem for cocycles}

Let $f:M\to M$ be a measurable transformation on the metric
space $(M,d)$, preserving some Borel probability measure
$\mu$, and let $A:M\to \operatorname{GL}(d,\RR)$ (or
$\operatorname{GL}(d,\CC)$) be any measurable function such
that $\log^+\|A(x)\|$ is $\mu$-integrable, where $\|\cdot\|$
is any norm in the space of real (or complex) $d\times d$
matrices and $\log^+(a):=\max\{\log a,0\}$ for $a>0$.  The
Oseledets theorem states that Lyapunov exponents exist for
the sequence $A^n(x)=A(f^{n-1}(x)) \cdots A(f(x)) \, A(x)$
for $\mu$-almost every $x\in M$. More precisely, for
$\mu$-almost every $x\in M$ there exists $k=k(x)\in\{1,
\ldots, d\}$, a filtration
$$
\{0\}=F^0_x\subset F^1_x \subset \cdots \subset F_x^{k-1}
\subset F_x^k=\RR^d (\text{or $\CC^d$}),
$$
and numbers $\chi_1(x)<\cdots<\chi_k(x)$ such that
\begin{align*}
  F^i_x&=\left\lbrace v\in\RR^d: \lim_{n\to\infty} \frac 1n
  \log\|A^n(x)\cdot v\| \le \chi_i(x)\right\rbrace, \qand
  \\
  \lim_{n\to\infty} &\frac 1n \log\|A^n(x)\cdot v\| =
  \chi_i(x) \quad\text{for all $v\in F_x^i\setminus
    F_x^{i-1}$ and $i\in\{1,\ldots, k\}$}
\end{align*}
where $\|\cdot\|$ is a norm in $\RR^d$ ($\CC^d$).  More
generally, this conclusion holds for any vector bundle
automorphism $A:\cV\to\cV$ over the transformation $f$, with
$A(x):\cV_x\to\cV_{f(x)}$ denoting the action of the
bijective linear map between the fibers $\cV_x$ and
$\cV_{f(x)}$. We assume that a measurable family
$\{\|\cdot\|_x\}_{x\in M}$ of norms is given which enables
us to define $\|A(x)\|:=\sup \{\|A(x)v\|_{f(x)}/\|v\|_x:
v\in\cV_x, v\neq\vec0\}$.

The Lyapunov exponents $\chi_i(x)$, and their number $k(x)$,
are measurable functions of $x$ and they are constant on orbits of
the transformation $f$. In particular, if the measure $\mu$ is
ergodic then $k$ and the $\chi_i$ are constant on a full
$\mu$-measure set of points. The subspaces $F_x^i$ also depend
measurably on the point $x$ and are invariant under the
automorphism:
$$
A(x) \cdot F_x^i = F_{f(x)}^i.
$$
If the transformation $f$ is invertible one obtains a
stronger conclusion, by applying the previous result also to
the inverse automorphism: for $n>0$ we set
$A^{-n}(x)=A(f^{-n}(x))^{-1}\cdots A(f^{-1}(x))^{-1}$ and $A^0(x)=Id$.
With this definition we associate to each $x$ a bi-infinite
sequence $(A^n(x))_{n\in\ZZ}$ of automorphisms of $\RR^d$ ($\CC^d$).
Assuming that both $\log^+\|A(x)\|$ and $\log^+\|A(x)^{-1}\|$
are in $L^1(\mu)$, one gets that there exists a
decomposition
$$
\cV_x=E_x^1\oplus\cdots\oplus E_x^k,
$$
defined at almost every point and such that $A(x) \cdot E_x^i =
E_{f(x)}^i$ and
\begin{equation}\label{eq:regular}
\lim_{n\to\pm\infty} \frac 1n \log\|A^n(x)\cdot v\| = \chi_i(x)
\end{equation}
for all $v\in E_x^i$ different from zero and all $i\in\{1,
\ldots, k\}$. Moreover the convergence in \eqref{eq:regular}
is uniform over the set $\{v\in E_x^i: \|v\|=1\}$ of unit
vectors; see \cite[Lemma 3.4.6]{BarPes2007} and \cite[Theorem
  3.5.10]{BarPes2007} for the non-invertible case.  These
\emph{Oseledets subspaces} $E_x^i$ are related to the
subspaces $F_x^i$ through
\begin{align}\label{eq:Osele-flag}
  F_x^j = \oplus_{i=1}^j E_x^i.
\end{align}
Hence, $\dim E_x^i = \dim F_x^i-\dim F_x^{i-1}$ is the
multiplicity of the Lyapunov exponent $\chi_i(x)$.

The angles between any two Oseledets subspaces decay
sub-exponentially along orbits of $f$ (see \cite[Theorem
1.3.11 \& Remark 3.1.8]{BarPes2007}):
$$
\lim_{n\to\pm\infty} \frac 1n \log\sin
\operatorname{angle}
\left(\bigoplus_{i\in I} E_{f^n(x)}^i, \bigoplus_{j\notin I} E_{f^n(x)}^j\right) = 0
$$
for any $I \subset \{1, \ldots, k\}$ and almost every point,
where for any given pair $E,F$ of complementary subspaces
(i.e. $E\oplus F=\RR^d$) we set 
\begin{align*}
  \cos\ang{E,F}:=\inf\left\lbrace |\langle v,w\rangle| : \|v\|=1=\|w\|, v\in E, w\in F\right\rbrace.
\end{align*}
These facts imply the regularity condition mentioned previously and,
in particular,
\begin{equation*}
\lim_{n\to\pm\infty} \frac 1n \log | \det A^n(x)| =
\sum_{i=1}^k \chi_i(x) \dim E_x^i
\end{equation*}
If the measure $\mu$ if $f$-ergodic, then the numbers
$k(x),\chi_i(x)$ are constant $\mu$-almost everywhere.  


The Oseledets Multiplicative Ergodic Theorem applies, in
particular, when $f:M\to M$ is a $C^1$ diffeomorphism on
some compact manifold and $A(x)=Df_x$. Notice that the
integrability conditions are automatically satisfied for
any $f$-invariant probability measure $\mu$, since the
derivative of $f$ and its inverse are bounded in norm;
see e.g. \cite{Os68,Pe77,Man87,BarPes2007}.


In general, the dependence of $E^i_x$ on $x$ is measurable.
Here we show that, under some mild assumptions on the
cocycle and the base transformation, this dependence is in
fact H{\"o}lder with probability arbitrarily close to
$1$. For the full stable or unstable subspace, H{\"o}lder
dependence on the base point has been established by
Brin~\cite{brin-app}. Our result gives H{\"o}lder dependence for each
Oseledets subspace corresponding to each Lyapunov
exponent. Furthermore, Theorem \ref{thm:KZ_global}
establishes H{\"o}lder dependence for the Kontsevich-Zorich
cocycle, while Theorem \ref{thm:KZ_disk} proves a similar
result in almost every direction on any Teichm\"uller disk.

\begin{remark}
  \label{rmk:complex-field}
  In what follows we present the statements and proofs in the
  case of real vector spaces and vector bundles, but the
  same results hold with the same proofs in the complex
  case.
\end{remark}

\subsection{Statement of the results}
\label{sec:statem-results}

Some preliminary definitions are needed.  We
define the distance from a vector $v$ to a subspace $E$ in
$\RR^d$ with a given euclidean norm $\|\cdot\|$ as
\begin{align*}
  \dist(v,E):=\min_{w\in E}\| v - w \|
\end{align*}
and then the distance between two subspaces $E,F$ of $\RR^d$
is defined to be
\begin{align}\label{eq:subspacedist}
  \dist(E,F):=\max\big\{\sup_{\|v\|=1,v\in E}\dist(v,F),
\sup_{\|v\|=1,v\in F}\dist(v,E)\big\}.
\end{align}

Let $M$ be a metric space endowed with a distance $d$.
We say that the cocycle $A:M\to GL(d,\RR)$ is
$\nu$-H\"older-continuous if  there exists a constant $H>0$
such that
\begin{align*}
  \|A(x)-A(y)\|\le H d(x,y)^\nu, \quad\text{for all $x,y\in M$}.
\end{align*}
In this case we also say that $A$ is a $(H,\nu)$-H\"older
cocycle.  Note that we do not assume that the base is
compact; however, the H\"older assumption is uniform on the
base.


\begin{maintheorem}
  \label{mthm:HolderBundles-ae}
  Let $f:M\circlearrowleft$ be a Lipschitz transformation
  preserving an ergodic probability measure $\mu$, and
  $A:M\to GL(d,\RR)$ a $\nu$-H\"older cocycle over $f$ such
  that both $\log\|A(x)\|$ and $\log\|A(x)^{-1}\|$ are
  $\mu$-integrable. We denote by $\chi_1<\dots<\chi_k$ the
  distinct $k$ Lyapunov exponents associated to this
  cocycle, and by $F^i_x$ the filtration by subspaces of
  $\RR^N$ corresponding to each $\chi_i$, $i=1,\dots,k$,
  defined for $\mu$-almost every $x\in M$.

  Then, for every $\epsilon>0$, there exists a compact
  subset $\Lambda_\epsilon$ of $M$, and constants
  $C=C(\Lambda_\epsilon)>0$,
  $\omega_i=\omega_i(\chi_1,\dots,\chi_k)\in(0,1),
  i=1,\dots,k$ and
  $\delta=\delta(\epsilon,A,\Lambda_\epsilon,\chi_1,\dots,\chi_k)>0$,
  such that
  \begin{enumerate}
  \item $\mu(\Lambda_\epsilon)\ge1-\epsilon$;
  \item for all $x,y\in\Lambda_\epsilon$ with
    $d(x,y)<\delta$ we have $ \dist(F^i_x,F^i_y)\le
    C_\epsilon d(x,y)^{\nu\omega_i}.$
  \end{enumerate}
  Moreover, if the map $f$ is also invertible, then letting
  $E^i_x$ be the Oseledets subspaces corresponding to each
  $\chi_i$, $i=1,\dots,k$, which form a splitting
  $\RR^n=E^1_x\oplus \dots\oplus E^k_x$ for
    $\mu$-almost every $x\in M$, we also have
    \begin{enumerate}
    \item[(3)] $ \dist(E^i_x,E^i_y)\le C_\epsilon
      d(x,y)^{\nu\omega_i}$ for all
      $x,y\in\Lambda_\epsilon$ with $d(x,y)<\delta$.
\end{enumerate}
\end{maintheorem}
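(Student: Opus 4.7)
The plan is to take $\Lambda_\epsilon$ to be a classical Pesin (Oseledets-regular) block on which the Oseledets subspaces depend continuously on $x$ and on which the cocycle satisfies uniform exponential bounds, and then to compare the filtrations/splittings at two nearby points $x,y\in\Lambda_\epsilon$ by iterating for an appropriate shadowing time $n=n(x,y)$. First I would fix $\epsilon_0>0$ small compared to the spectral gaps $\chi_{i+1}-\chi_i$ (in terms of the $\chi_j$ only, not of $\epsilon$), and, using Luzin\,+\,Egorov applied to the Oseledets theorem (this is the content of the technical lemmas announced for Section~\ref{sec:prelim-results-defin}), produce a compact $\Lambda_\epsilon$ with $\mu(\Lambda_\epsilon)\ge 1-\epsilon$ on which: (a) $x\mapsto F^i_x$ (and, if $f$ is invertible, $x\mapsto E^i_x$) is continuous; (b) one has uniform one-sided estimates $\|A^n(x)\|\le C e^{(\chi_k+\epsilon_0)n}$ and $\|A^n(x)^{-1}\|\le C e^{-(\chi_1-\epsilon_0)n}$ for $n\ge 0$; (c) angles between Oseledets subspaces are uniformly bounded below.

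Let $L$ be the Lipschitz constant of $f$. Given $x,y\in\Lambda_\epsilon$ with $d(x,y)<\delta$, take the shadowing time $n=n(x,y)$ to be the largest integer with $L^n d(x,y)\le 1$, so that $d(f^j x,f^j y)\le L^j d(x,y)$ for $0\le j\le n$ and $n\asymp \log(1/d(x,y))/\log L$. I would then telescope
\[
A^n(x)-A^n(y)=\sum_{j=0}^{n-1}A^{n-j-1}(f^{j+1}x)\bigl(A(f^j x)-A(f^j y)\bigr)A^j(y),
\]
bound $\|A(f^j x)-A(f^j y)\|\le HL^{\nu j}d(x,y)^\nu$ from the $\nu$-H\"older hypothesis, and control the middle factor via the identity $A^{n-j-1}(f^{j+1}x)=A^n(x)\,A^{j+1}(x)^{-1}$ together with step~(b), even though $f^{j+1}x$ may leave $\Lambda_\epsilon$. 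This yields $\|A^n(x)-A^n(y)\|\le C_1 e^{\kappa n}L^{\nu n}d(x,y)^\nu$ with $\kappa=\kappa(\chi_1,\dots,\chi_k,\epsilon_0)$ explicit.

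To convert closeness of matrices into closeness of subspaces, I would use that, with uniform constants on $\Lambda_\epsilon$, $F^i_x$ is characterized by the singular value decomposition of $A^n(x)$, and that consecutive singular values of $A^n(x)$ straddling the gap between $\chi_i$ and $\chi_{i+1}$ are separated by a factor of at least $e^{(\chi_{i+1}-\chi_i-2\epsilon_0)n}$. A Wedin/Davis--Kahan-type perturbation bound (or a direct invariant cone-field argument on the Grassmannian, which I expect to be one of the roles of the technical lemmas of Section~\ref{sec:prelim-results-defin}) then gives
\[
\dist(F^i_x,F^i_y)\le C_2\,\|A^n(x)-A^n(y)\|\cdot e^{-(\chi_{i+1}-\chi_i-2\epsilon_0)n}.
\]
Substituting the bound from the previous step and optimizing $n\asymp \log(1/d(x,y))/\log L$ produces $\dist(F^i_x,F^i_y)\le C_\epsilon d(x,y)^{\nu\omega_i}$ with $\omega_i\in(0,1)$ an explicit function of $\chi_1,\dots,\chi_k$, provided $\epsilon_0$ was chosen small enough at the outset. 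For the invertible case, running the same argument for $f^{-1}$ and $A^{-1}$ yields a H\"older bound on a ``dual'' filtration, and $E^i_x$ is recovered as an intersection of two such filtrations, combining the one-sided H\"older estimates into~(3).

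The main obstacle is the one already indicated: keeping uniform control on the intermediate cocycle norm $\|A^{n-j-1}(f^{j+1}x)\|$ when $f^{j+1}x$ has left $\Lambda_\epsilon$. The workaround through the identity $A^{n-j-1}(f^{j+1}x)=A^n(x)\,A^{j+1}(x)^{-1}$ introduces a loss of roughly $e^{(\chi_k-\chi_1+O(\epsilon_0))n}$ in the telescoping, which the spectral-gap factor $e^{(\chi_{i+1}-\chi_i-2\epsilon_0)n}$ must outpace after balancing against $L^{\nu n}$ in the optimization over $n$. Ensuring positivity of \emph{every} $\omega_i$ simultaneously is what forces $\epsilon_0$ to be coordinated with all the gaps at the start of the proof.
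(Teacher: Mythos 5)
Your proposal is essentially correct and, for the forward filtrations $F^i_x$ (the non-invertible part of the theorem), follows the paper's route: your telescoping bound on $\|A^n(x)-A^n(y)\|$ is precisely the content of the paper's Lemma~\ref{le:Mane}, and your ``cone-field / perturbation'' step is the content of Lemma~\ref{le:seqmatrixesHolder} (adapted from Barreira--Pesin); the optimization over $n$ matches the paper's choice of $n$ with $(\lambda/a)^{n+1}<\delta$. One small imprecision: $F^i_x$ is not literally a singular subspace of $A^n(x)$ for finite $n$ — there is an exponentially small but nonzero discrepancy — so the Davis--Kahan/Wedin phrasing would need an extra step relating the finite-$n$ singular subspaces to the Oseledets subspaces; the cone-field variant you mention as an alternative is exactly what the paper uses and avoids this issue. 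Where you genuinely diverge from the paper is the invertible case. The paper proves statement~(3) by splitting $\RR^d$ into \emph{three} blocks $E^A\oplus F^A\oplus G^A$ (with $F^A=E^i_x$ in the middle) and proving the fairly involved Lemma~\ref{le:3bundlesmatrixHolder}, which requires identifying $F^A\oplus G^A$ with $F^B\oplus G^B$ via a graph map, conjugating the cocycle, and controlling the resulting distortion of constants and angles; the resulting exponent $\beta$ is a \emph{product} of two exponents. You instead obtain $E^i_x$ as the transverse intersection of the forward filtration $F^i_x$ and the analogous backward filtration, each separately H\"older by the two-block argument applied to $(A_n)_{n\ge0}$ and $(A_{-n})_{n\ge0}$. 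This is a valid and arguably cleaner route: intersection of H\"older maps to Grassmannians along a compact set of pairs in general position is H\"older with exponent the \emph{minimum}, which is better than the product exponent of Lemma~\ref{le:3bundlesmatrixHolder}. In fact the paper uses exactly this intersection argument — but only in Section~\ref{sec:KZ} for the Kontsevich--Zorich cocycle, as Lemma~\ref{lemma:Grass_intersection} — so you have in effect unified the two cases. The one thing your sketch should make explicit, as the paper does, is that the uniform lower bound on the angles between Oseledets subspaces on $\Lambda_\epsilon$ is what guarantees the general-position hypothesis needed for the intersection step, and is also what makes the constant $d$ in the cone-field lemma uniform on the block.
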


We say that a measurable subbundle $E=E(x)$ of $\RR^d$
defined over a measurable subset $\Lambda$ is locally
$\alpha$-H\"older continuous with H\"older constant $H>0$
if, for every $x\in\Lambda$, there exists $\delta>0$ such
that
\begin{align*}
  \dist(E(x),E(y))\le H d(x,y)^\alpha
  \quad\text{for all}\quad
  x,y\in\Lambda \quad\text{with}\quad d(x,y)<\delta.
\end{align*}

\begin{remark}
  \label{rmk:deltax}
  The definition of H\"older-continuity for measurable
  subbundles allows the local radius $\delta>0$ to depend on
  $x$. This is not the case for linear cocycles, but is
  needed for vector bundle automorphisms; see
  Theorem~\ref{mthm:Holder-bundles-vectorbundle} in the
  following Subsection~\ref{sec:case-vector-bundle-1}.
\end{remark}

Thus Theorem~\ref{mthm:HolderBundles-ae} means that
the subbundles given by Oseledets' Theorem are
H\"older-continuous on arbitrarily big (in measure) compact
subsets of the ambient space, for H\"older cocycles over a
Lipschitz base transformation.

\begin{remark}
  \label{rmk:C-infty}
  We have $C_\epsilon=C(\Lambda_\epsilon)\to+\infty$ and
  $\delta\to0$ as $\epsilon\to0+$, so that the H\"older
  constant becomes worse as the size of $\Lambda_\epsilon$
  grows to fill a full $\mu$-measure subset of $M$.
\end{remark}

The proof shows that the dependence of the Oseledets
directions on the base point at every block
$\Lambda_\epsilon$ is H\"older, with H\"older exponent
only dependent on the minimum gap
$\epsilon_0=\min\{\chi_{i+i}-\chi_i : 1\le i < k\}$ of the
Lyapunov spectrum, and H\"older constant essentially
dependent on the choice of the regular block.

\subsubsection{The case of linear multiplicative cocycles over flows}
\label{sec:case-linear-multipl}

We can extend our result to the case of linear
multiplicative cocycles over flows with the exact same
conclusions (1), (2) and (3) of
Theorem~\ref{mthm:HolderBundles-ae} as a corollary of the
proof for invertible transformations.

We recall some common
definitions. Let $(X,\mu)$ be a Lebesgue space.
The measurable map $\vfi:\RR\times X\to X$ is a
\emph{measurable flow} if $\vfi_0=Id$ and
$\vfi_t\circ\vfi_s=\vfi_{t+s}$ for each $t,s\in\RR$. The
flow $\vfi$ preserves the measure $\mu$ if
$\vfi_t:=\vfi(t,\cdot)$ preserves $\mu$ for all $t\in\RR$.

A measurable function $A:X\times\RR\to GL(d,\RR)$ is a
\emph{linear multiplicative cocycle over $\vfi$} if
$A(x,0)=Id$ and $A(x,t+s)=A(\vfi_t(x),s)A(x,t)$ for all
$s,t\in\RR$ and for every $x\in X$.

We can also consider a measurable linear cocycle $A$ acting
on a vector bundle $\cV$ over $M$, that is
$A(x,t):\cV_x\to\cV_{\vfi_t(x)}$ is a linear bijection for
all $(x,t)\in M\times\RR$. Taking a family of norms on the
vector bundle enables us to define the norm of $A(x,t)$.

Let us take a measurable cocycle $A$ over a flow $\vfi$
which preserves a probability measure $\mu$ such that
\begin{align*}
  \sup_{-1\le t\le1}\log^+\|A(x,t)\|\in L^1(X,\mu).
\end{align*}
Oseledets' Theorem ensures that for $\mu$-almost every $x\in M$
there exists $k=k(x)\in\{1, \ldots, d\}$, a filtration $
\{0\}=F^0_x\subset F^1_x \subset \cdots \subset F_x^{k-1}
\subset F_x^k=\cV_x, $ and numbers
$\chi_1(x)<\cdots<\chi_k(x)$ such that
\begin{align*}
  F^i_x&=\{v\in\cV_x: \lim_{t\to\infty} \frac 1t
  \log\|A(x,t)\cdot v\| \le \chi_i(x)\}, \qand
  \\
  \lim_{t\to\infty} &\frac 1t \log\|A(x,t)\cdot v\| =
  \chi_i(x) \quad\text{for all $v\in F_x^i\setminus
    F_x^{i-1}$ and $i\in\{1,\ldots, k\}$}.
\end{align*}
The subspaces are invariant: $A(x,t)\cdot F^i_x =
F^i_{\vfi_t(x)}$ and depend measurably on the base point
$x\in X$.  The function $k(x)$ and the Lyapunov exponents
$\chi_i(x)$ are measurable functions, constant on orbits of
the flow $\vfi$ and so, if $\mu$ is ergodic, these are
constant functions almost everywhere. Moreover there exists
a decomposition $\cV_x=E_x^1\oplus\cdots\oplus E_x^k$
defined $\mu$-almost everywhere satisfying $A(x,t) \cdot
E_x^i = E_{\vfi_t(x)}^i$ and
\begin{equation}\label{eq:flowregular}
\lim_{t\to\pm\infty} \frac 1t \log\|A(x,t)\cdot v\| =
\chi_i(x) = \chi_i,
\quad 0\neq v\in E_i(x), \quad i=1,\dots,k.
\end{equation}
As before the convergence in
\eqref{eq:flowregular} is uniform over the set of unit
vectors and the Oseledets subspaces $E_x^i$ are related to
the subspaces $F_x^j$ as follows $$F_x^j = \oplus_{i=1}^j E_x^i.$$

Now let a linear multiplicative cocycle $A(x,t)$ over a
flow $\vfi$ preserving a probability measure $\mu$ which is
also ergodic with respect to $\vfi$ be given. That is, we
assume that every measurable subset $A$ of $X$ satisfying
$\vfi_t(A)=A$ for all $t\in\RR$ also satisfies
$\mu(A)\cdot\mu(X\setminus A)=0$. Then it is well known that
there exists a denumerable subset $Y$ of $\RR$ such that the
bijection $\vfi_t$ is ergodic with respect to $\mu$ for all
$t\in\RR\setminus Y$; see e.g. \cite{PuSh71}.

Let us fix $0<\tau\in\RR\setminus Y$. Then $A^\tau:X\to
GL(d,\RR), x\in X\mapsto A(x,\tau)$ defines a cocycle over
$\vfi_\tau$ since $A(x,0)=Id$ and
\begin{align*}
  A(x,n\tau)
  &=
  A(\vfi_{(n-1)\tau}(x),\tau)\cdots 
  A(\vfi_{\tau}(x),1) A(x,\tau), \quad n\ge0;
  \\
  A(x,n\tau)
  &=
  A(\vfi_{-n\tau}(x),-\tau)\cdots
  A(\vfi_{-\tau}(x),-\tau)
  ,\quad n<0
\end{align*}
where $A(\cdot,-\tau)=A(\cdot,\tau)^{-1}$ by the cocycle
property.

We now note that by ergodicity, since the following limits
exist, we have the equalities
\begin{align*}
  \lim_{t\to\pm\infty}\frac1t\log\|A(x,t)v\| 
  =
  \chi_i
  =
  \lim_{n\to\pm\infty} \frac1n\log\|A(x,n\tau)v\|,
  \quad
  v\in E_i(x),\, \mu-\text{a.e.  } x.
\end{align*}
Therefore the Lyapunov exponents and Oseledets subspaces of
the cocycle $A^\tau$ over the invertible transformation
$f=\vfi_\tau$ coincide with those of the cocycle $A$ over
the flow $\vfi$.

We obtain the following result by a direct application of
Theorem~\ref{mthm:HolderBundles-ae} to the cocycle $A^\tau$
and the transformation $f$. We say that a linear
multiplicative cocycle $A:X\times\RR\to X$ over a flow
$\vfi$ on the metric space $X$ (which is a Lebesgue space
with the Borel $\sigma$-algebra) is $\nu$-H\"older if, for
every $t\in\RR$ there exists a constant $H=H_t>0$ such that
\begin{align*}
  \|A(x,t)-A(y,t)\|\le H d(x,y)^\nu, \quad x,y\in X.
\end{align*}

\begin{maincorollary}
  \label{mcor:Holder-Osel-flows}
  Let $\vfi:\RR\times X\to X$ be a flow preserving an
  ergodic probability measure $\mu$ and such that
  $\vfi_t:X\to X$ is a Lipschitz transformation for all
  $t\in\RR$. Let also $A:X\times\RR\to GL(d,\RR)$ be a
  $\nu$-H\"older linear multiplicative cocycle over $\vfi$
  such that $\sup_{t\in[-1,1]}\log^+\|A(x,t)\|$ is a
  $\mu$-integrable function, denote by $\chi_1<\dots<\chi_k$
  the $k$ distinct Lyapunov exponents associated to this
  cocycle, and by $E^i_x$ the Oseledets subspaces
  corresponding to each $\chi_i$, $i=1,\dots,k$, which form
  a splitting $\RR^n=E^1_x\oplus \dots\oplus E^k_x$ for
  $\mu$-almost every $x\in X$.
 
  Then, for every $\epsilon>0$, there exists a compact
  subset $\Lambda_\epsilon$ of $X$, and constants
  $C=C(\Lambda_\epsilon)>0$,
  $\omega_i=\omega_i(\chi_1,\dots,\chi_k)\in(0,1),
  i=1,\dots,k$ and
  $\delta=\delta(\epsilon,A,\Lambda_\epsilon,\chi_1,\dots,\chi_k)>0$,
  such that
  \begin{enumerate}
  \item $\mu(\Lambda_\epsilon)\ge1-\epsilon$;
  \item $ \dist(E^i_x,E^i_y)\le C_\epsilon
    d(x,y)^{\nu\omega_i}$ for all $x,y\in\Lambda_\epsilon$
    with $d(x,y)<\delta$.
\end{enumerate}
\end{maincorollary}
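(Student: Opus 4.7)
The plan is to reduce directly to the discrete-time Theorem~\ref{mthm:HolderBundles-ae}, applied to the time-$\tau$ map of the flow for a carefully chosen $\tau$. This reduction is essentially set up in the preamble of the statement, so the argument is to verify the hypotheses and then invoke the main theorem.

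First, I would pick $\tau\in(0,1]\setminus Y$, where $Y\subset\RR$ is the countable exceptional set (mentioned just before the Corollary) outside of which $\vfi_t$ is $\mu$-ergodic; such a $\tau$ exists since $Y$ is countable. Set $f:=\vfi_\tau$ and $A^\tau(x):=A(x,\tau)$. Then I would check the three hypotheses of Theorem~\ref{mthm:HolderBundles-ae}: (i) $f$ is Lipschitz, which is in the assumption; (ii) $A^\tau$ is $\nu$-H\"older as a map $X\to GL(d,\RR)$, which is exactly the flow H\"older assumption at the specific time $t=\tau$ with constant $H_\tau$; (iii) $\log\|A^\tau(x)\|$ and $\log\|A^\tau(x)^{-1}\|$ are in $L^1(\mu)$. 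The positive part of $\log\|A(x,\tau)\|$ is dominated by $\sup_{t\in[-1,1]}\log^+\|A(x,t)\|$, which is in $L^1(\mu)$ by hypothesis; the negative part is controlled via the cocycle identity $A(x,\tau)^{-1}=A(\vfi_\tau(x),-\tau)$ together with $\vfi_\tau$-invariance of $\mu$, reducing to the same integrable bound. The integrability of $\log\|A^\tau(x)^{-1}\|$ follows by the same argument applied to $A(x,-\tau)$.

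Next, I would apply Theorem~\ref{mthm:HolderBundles-ae} to the cocycle $A^\tau$ over $f=\vfi_\tau$, which is $\mu$-ergodic by the choice of $\tau$, and invertible. This produces, for each $\epsilon>0$, a compact set $\Lambda_\epsilon\subset X$ with $\mu(\Lambda_\epsilon)\ge 1-\epsilon$, constants $C$, $\omega_i$, $\delta$, and the Oseledets splitting $\RR^d=\widetilde{E}^1_x\oplus\cdots\oplus\widetilde{E}^k_x$ of $A^\tau$, with the H\"older bound $\dist(\widetilde{E}^i_x,\widetilde{E}^i_y)\le C_\epsilon d(x,y)^{\nu\omega_i}$ for $x,y\in\Lambda_\epsilon$ sufficiently close. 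To conclude, I would invoke the identification argued in the text preceding the Corollary: by ergodicity of $\vfi$, the limits $\frac1t\log\|A(x,t)v\|$ exist and coincide with the analogous discrete limits along the subsequence $t=n\tau$, so the Lyapunov exponents and, as a consequence, the Oseledets subspaces $\widetilde{E}^i_x$ of $A^\tau$ agree $\mu$-almost everywhere with the subspaces $E^i_x$ of the flow cocycle. Intersecting $\Lambda_\epsilon$ with this full-measure set (and slightly shrinking to stay compact of measure $\ge 1-\epsilon$) transfers the conclusion to $E^i_x$.

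The argument is essentially routine once the discrete result is in hand; the only point requiring genuine care is the integrability of $\log\|A^\tau\|^{\pm1}$, which is where the hypothesis of uniform integrability on $[-1,1]$ is used, and the necessity of selecting $\tau$ outside the null set $Y$ so that ergodicity of the time-$\tau$ map is preserved. No additional dynamical content is needed beyond what Theorem~\ref{mthm:HolderBundles-ae} provides.
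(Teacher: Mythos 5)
Your reduction to the discrete-time Theorem~\ref{mthm:HolderBundles-ae} via the time-$\tau$ map $\vfi_\tau$ with $\tau\notin Y$, using the coincidence of the Lyapunov spectrum and Oseledets subspaces for the flow and for $A^\tau$, is exactly the paper's argument. Your extra diligence in verifying the $L^1$-hypotheses of Theorem~\ref{mthm:HolderBundles-ae} from the assumption on $\sup_{t\in[-1,1]}\log^+\|A(x,t)\|$ (via the cocycle identity and $\vfi_\tau$-invariance of $\mu$, and by restricting $\tau$ to $(0,1]$) is a worthwhile elaboration of a step the paper leaves implicit.
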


\begin{remark}\label{rmk:rescale-time}
  It is no restriction to assume that $\tau=1$ above since
  we can always make a linear rescale of time, e.g., we may
  set $s=\tau \cdot t$ as a new time variable.
\end{remark}

\subsubsection{The case of vector bundle automorphisms}
\label{sec:case-vector-bundle-1}

Now $A:\cV\to\cV$ is an automorphism of the $d$-dimensional
vector bundle $\cV$ covering $f:M\to M$. We assume that $f$
is Lipschitz with Lipschitz constant $L>0$, that $M$ is a
finite $m$-dimensional manifold and that $A$ is \emph{locally
  H\"older}. This means that 
\begin{itemize}
\item we can find an at most denumerable locally finite open
  cover $(U_i,\psi_i)_{i\ge1}$ of $M$ together with
  trivializing charts of the bundle $\cV$, that is,
  $\psi_i:p^{-1}(U_i)\to U_i\times \RR^d$ given by
  $w\in\cV_x\mapsto(x,\psi_{i,x}(w))$ and
  $\psi_{i,x}:\cV_x\to\RR^d$ a linear bijection for all $
  x\in U_i$, whose overlaps satisfy
  \begin{align*}
    \psi_{j,i}:=\psi_i\circ\psi_j^{-1}:(U_i\cap
    U_j)\times\RR^d\to (U_i\cap U_j)\times\RR^d, \quad
    (x,v)\mapsto
    (x,\psi_{j,i,x}(v))=(x,\psi_{i,x}\circ\psi_{j,x}^{-1}(v))
  \end{align*}
  for $x\in U_i\cap U_j$ and $x\in U_i\cap U_j\mapsto
  \psi_{j,i,x}\in GL(d,\RR)$;
\item we assume without loss of generality that the open
  sets of the cover of $M$ are also domains of charts
  $\phi_i:U_i\to\RR^m$ of $M$; so we can define on $U_i$ a
  distance $d_i(x,y)=\|\phi_i(x)-\phi_i(y)\|_2$ for
  $x,y\in U_i$ and all possible $i$, where $\|\cdot\|_2$ is
  the Euclidean norm in $\RR^m$;
\item since the initial open cover was locally
finite, then given any point $x\in M$ we have at most
finitely many chart domains $U_{i_1(x)},\dots, U_{i_k(x)}$
containing $x$. Hence we can define
\begin{align*}
  d_x(y):=\min\{d_{i_1(x)}(y,x), \dots,
  d_{i_k(x)}(y,x)\}
  \quad\text{for all}\quad y\in
  U_{i_1(x)}\cap\dots\cap U_{i_k(x)}.
\end{align*}
In this way, we write for $x\in M$ and $\xi>0$
\begin{align*}
  B(x,\xi):=\{y\in U_{i_1(x)}\cap\dots\cap U_{i_k(x)} : d_x(y)<\xi\}.
\end{align*}

\item likewise we define the norm $\|v\|_x:=
\sup\{\|\psi_{i_j(x),x}(v)\|_2: j=1,\dots,k\}$ for each
$x\in M$, where $\|\cdot\|_2$ is the Euclidean norm in
$\RR^d$. This is a measurable family of norms on the bundle
$\cV$ which is locally constant. Naturally we write
$\|A(x)\|$ for $\sup \{\|A(x)v\|_{f(x)}/\|v\|_x: v\in\cV_x,
v\neq\vec0\}$ in what follows;

\item for any given $x$ and $i,j$ such that $x\in U_j$,
  $f(x)\in U_i$ we assume that $A(x)$ is $(H,\nu)$-H\"older
  as before on a neighborhood of $x$:
\begin{align*}
  \|A(x)-A(y)\|\le H\cdot
  d_x(y)^\nu, \quad\text{for all
    $x,y\in U_j\cap f^{-1}(U_i)$}
\end{align*}
where we allow $H=H_{j,i}$ to depend on $i,j$.
\end{itemize}

The proof of the previous
Theorem~\ref{mthm:HolderBundles-ae} uses Lemma~\ref{le:Mane}
which says that there exists $c_1>0$ such that the H\"older
constant of an iterate $A^n$ of an H\"older cocycle $A$ is
bounded by $c_1^n$ for all $n\ge1$.  We cannot identify
naturally all the subspaces $\cV_x$ where $A(x)$ acts, so we
assume that iterates $A^n$ of the vector bundle automorphism
have H\"older constant similarly bounded in the following
way.

We take an ergodic $f$-invariant probability measure $\mu$
and the family $(U_i)_{i\ge1}$ of open charts of the vector
bundle, and assume that
\begin{itemize}
\item the partition $\cP$ given by the intersection of the
  these sets is a $\mu-\bmod0$ partition of $M$;
\item there exists a full $\mu$-measure subset $Y$ of $M$
  such that the refined partitions $\cP_n:=\cP\vee
  f^{-1}\cP\vee\dots\vee f^{-n+1}\cP$ \emph{do not shrink
    faster than exponentially}, that is, they satisfy: there
  exists $0<\xi<1$ such that for all $x\in Y$ there is
  $c=c(x)>0$ so that
  \begin{align}\label{eq:innerball}
    B(x,c\xi^n)\subset\cP_n(x) \quad\text{for all}\quad
    n\ge1.
  \end{align}
In the above statement, as usually, $\cP(x)$ denotes the
atom of $\cP$ which contains $x$; or $\cP(x)=\emptyset$ if
$x$ is not contained in any atom (which can only happen for
a zero $\mu$-measure subset of points).

\item the diameter of $\cP_n$ can be made sufficiently small
  $\mu$-almost everywhere, that is, there exists
  $\delta_0=\delta_0(\epsilon,A,\Lambda_\epsilon,\chi_1,\dots,\chi_k)>0$
  and for $x\in Y$ there exists $n$ such that
  $\diam\cP_n(x)<\delta_0$.

\item there exists $c_1>0$ such that for all $n\ge1$, $x\in
  Y$ and $y\in\cP_n(x)$
  \begin{align}\label{eq:Holder-bundle-automorph}
    \|A^n(x)-A^n(y)\|\le c_1^n d_x(y)^\nu.
  \end{align}
\end{itemize}
Above we implicitly assume that the
bound~\eqref{eq:Holder-bundle-automorph} does not depend on
the choices of $i,j$.

We say that a vector bundle automorphism $A$ satisfying the
above properties is an \emph{admissible $(c_1,\nu)$-H\"older
  vector bundle automorphism with respect to $\mu$.} With
these notions we can now state our main result.

\begin{maintheorem}
  \label{mthm:Holder-bundles-vectorbundle}
  Let $A$ be an admissible $(c_1,\nu)$-H\"older vector
  bundle automorphism with respect to the ergodic
  $f$-invariant probability measure $\mu$ such that both
  $\log\|A(x)\|$ and $\log\|A(x)^{-1}\|$ are
  $\mu$-integrable. We denote by $\chi_1<\dots<\chi_k$ the
  $k$ distinct Lyapunov exponents associated to this
  cocycle, and by $F^i_x$ the filtration by subspaces of
  $\RR^N$ corresponding to each $\chi_i$, $i=1,\dots,k$,
  defined for $\mu$-almost every $x\in M$.

  Then, for every $\epsilon>0$, there exists a compact
  subset $\Lambda_\epsilon$ of $M$, and constants
  $C=C(\Lambda_\epsilon)>0$,
  $\omega_i=\omega_i(\chi_1,\dots,\chi_k)\in(0,1),
  i=1,\dots,k$ 
  such that
  \begin{enumerate}
  \item $\mu(\Lambda_\epsilon)\ge1-\epsilon$;
  \item for all $x\in\Lambda_\epsilon$ there exists
    $\delta=\delta(x)>0$
    such that for all 
    $y\in B(x,\delta)$ we have $
    \dist(F^i_x,F^i_y)\le C_\epsilon
    d_x(y)^{\nu\omega_i}.$
  \end{enumerate}
  Moreover, if the map $f$ is also invertible, then letting
  $E^i_x$ be the Oseledets subspaces corresponding to each
  $\chi_i$, $i=1,\dots,k$, which form a splitting
  $T_x M=E^1_x\oplus \dots\oplus E^k_x$ for $\mu$-almost
  every $x\in M$, we also have $\delta=\delta(x)>0$ 
  defined for
  $x\in\Lambda_\epsilon$ satisfying
    \begin{enumerate}
    \item[(3)] $ \dist(E^i_x,E^i_y)\le C_\epsilon
      d_x(y)^{\nu\omega_i}$ for all $x\in\Lambda_\epsilon$,
      $y\in \Lambda_\epsilon\cap B(x,\delta)$.
\end{enumerate}
\end{maintheorem}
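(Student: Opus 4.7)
The plan is to re-run the proof of Theorem~\ref{mthm:HolderBundles-ae} with two modifications adapted to the vector bundle setting: (a) the uniform Hölder control on iterates $A^n$ that Lemma~\ref{le:Mane} provides for a genuine cocycle is replaced by the admissibility estimate~\eqref{eq:Holder-bundle-automorph}, which is valid only inside partition elements $\cP_n(x)$; and (b) the comparison of fibres is carried out through the local trivializations $\psi_i$ on a common chart containing both $x$ and $y$. Both modifications feed into a single new feature: the radius of validity $\delta$ is no longer uniform on $\Lambda_\epsilon$ but depends on $x$, which is exactly the phenomenon anticipated in Remark~\ref{rmk:deltax}.

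I would construct $\Lambda_\epsilon$ as a Pesin-style regular block inside the full $\mu$-measure set $Y$ on which the non-shrinking condition~\eqref{eq:innerball} holds. On $\Lambda_\epsilon$ I demand uniform control of $\|A^n(x)\|$ and $\|A^{-n}(x)\|$, uniform lower bounds on the angles between the Oseledets subspaces (or on the gaps between the filtration pieces $F^i_x$), a uniform lower bound $c_\epsilon>0$ for the function $c(x)$ of~\eqref{eq:innerball}, and a uniform upper bound on $\diam\cP_n(x)$. Each of these conditions defines a measurable set of positive $\mu$-measure, so a standard exhaustion yields a compact $\Lambda_\epsilon$ with $\mu(\Lambda_\epsilon)\ge 1-\epsilon$. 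For $x\in\Lambda_\epsilon$, I choose $\delta(x)$ small enough that every $y\in B(x,\delta(x))$ lies in $\cP_n(x)$ for some $n=n(x,y)$; by~\eqref{eq:innerball} it suffices that $d_x(y)<c(x)\xi^n$. Since $x$ and $y$ then share a partition atom, they also share a trivializing chart $U_i$, so the fibres $\cV_x$ and $\cV_y$ can be identified through $\psi_i$ and the distance~\eqref{eq:subspacedist} between the filtration pieces is well defined. The Hölder estimate~\eqref{eq:Holder-bundle-automorph} now plays the role of Lemma~\ref{le:Mane} verbatim: the displacement of the iterated filtrations at time $n$ is bounded by $c_1^n d_x(y)^\nu$, while the regular-block growth control governs the pull-back under $A^{-n}$ (respectively the push-forward under $A^n$ for $f$ non-invertible). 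Balancing $n$ against these exponential rates and against the spectral gaps of $\chi_1<\cdots<\chi_k$, as in the proof of Theorem~\ref{mthm:HolderBundles-ae}, reproduces the exponent $\nu\omega_i$ with $\omega_i=\omega_i(\chi_1,\dots,\chi_k)$. For item~(3), invertibility of $f$ allows the same argument to be applied to the inverse cocycle, giving a backward filtration whose intersection with the forward filtration on the block $\Lambda_\epsilon$ cuts out the Oseledets subspaces $E^i_x$; the transversality enforced on $\Lambda_\epsilon$ then converts the filtration bounds into $\dist(E^i_x,E^i_y)\le C_\epsilon d_x(y)^{\nu\omega_i}$.

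The main obstacle is the simultaneous bookkeeping of three competing exponential rates: the partition $\cP_n$ contracts at rate $\xi^n$, the admissible Hölder constant grows as $c_1^n$, and the Pesin-block growth of $A^{\pm n}$ is governed by $e^{n\chi_i}$. The exponent $\omega_i$ emerges from the optimal choice of $n=n(x,y)$ that balances these three rates, and the crucial difference with the linear cocycle case is that the admissibility estimate~\eqref{eq:Holder-bundle-automorph} is available only inside $\cP_n(x)$. Combined with the location-dependent lower bound $c(x)\xi^n$ on the inner radius of this atom, this is precisely what prevents the choice of a uniform radius on $\Lambda_\epsilon$ and forces the $x$-dependent $\delta(x)$ in conclusions (2) and (3); ensuring that $\delta(x)$ can be chosen positive at every $x\in\Lambda_\epsilon$ reduces to the fact that $c(x)>0$ on $Y$, which is built into the admissibility hypothesis.
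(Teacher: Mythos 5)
Your construction of $\Lambda_\epsilon$, the use of the admissibility estimate~\eqref{eq:Holder-bundle-automorph} in place of Lemma~\ref{le:Mane}, the passage to a common trivializing chart through $\psi_i$, and the balancing of the partition shrinkage rate $\xi^n$ against the H\"older constant $c_1^n$ and the Pesin block growth all match the paper's proof of items (1) and (2) in Section~\ref{sec:case-vector-bundle}. The exponent $\nu\omega_i$ then emerges exactly as you describe by an application of Lemma~\ref{le:seqmatrixesHolder} to the trivialized matrices $A_n = \psi_{j_n,x_n}A^n(x)\psi_{i,x}^{-1}$ and $B_n = \psi_{r_n,y_n}A^n(y)\psi_{i,y}^{-1}$.

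Where you diverge is item (3). You propose to prove H\"older continuity of the forward and backward filtrations separately, then recover $E^i_x$ as the transverse intersection on the regular block. The paper instead runs Lemma~\ref{le:3bundlesmatrixHolder}, which treats the three-way splitting $E^*\oplus F^*\oplus G^*$ in one pass using the forward and backward iterates simultaneously. Your intersection route is legitimate and is in fact the one the paper itself uses later for the Kontsevich--Zorich cocycle via Lemma~\ref{lemma:Grass_intersection}; it requires uniform transversality of the two filtrations on $\Lambda_\epsilon$, which you correctly demand, and yields exponent $\min(\nu_+,\nu_-)$ rather than the product-type exponent $\beta$ that appears in Lemma~\ref{le:3bundlesmatrixHolder}. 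Both give an $\omega_i\in(0,1)$ and either suffices for the statement. The intersection argument is arguably cleaner, while the three-bundle lemma avoids invoking Lusin continuity of the backward filtration.

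One small inconsistency in your write-up: you first say that on $\Lambda_\epsilon$ you will \emph{impose} a uniform lower bound $c_\epsilon>0$ on $c(x)$ and a uniform upper bound on $\diam\cP_n(x)$, but then conclude that the location-dependent $c(x)\xi^n$ ``forces'' an $x$-dependent $\delta(x)$. If you actually succeeded in making both of those controls uniform (by an Egorov/Lusin shrinkage of $\Lambda_\epsilon$), the radius $\delta$ would become uniform as well, contradicting the second half of your remark. The paper's hypotheses only guarantee the existence, for each $x\in Y$, of some $n(x)$ with $\diam\cP_{n(x)}(x)<\delta_0$; they do not supply uniformity. The paper accordingly keeps $N_2(x)$ and $c(x)$ point-dependent, setting $\delta(x)=c(x)\xi^{N(x)}$, which is why the theorem and Remark~\ref{rmk:deltax} allow $\delta$ to vary. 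Either let those quantities remain $x$-dependent (as the paper does), or genuinely make them uniform and then state the stronger conclusion; as written the two halves of your account pull in opposite directions even though neither choice actually breaks the argument.
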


\begin{remark}
  \label{rmk:admissible-cocycle}
  Condition~\eqref{eq:innerball} can be easily obtained in
  many examples as follows.  
  It is well-known that, if
  $\theta(x):=-\log\dist_x(\partial\cP(x),x)$ is
  $\mu$-integrable, then we have $(1/n)\cdot\theta\circ
  f^n\xrightarrow[n\to+\infty]{}0, \mu-$a.e.  Hence given
  $\alpha>0$ for $\mu$-a.e. $x$ we can find $N(x)>1$ such
  that $\dist_x(\partial\cP(f^n(x)), f^n(x)) \ge e^{-\alpha
    n}$ for all $n\ge N(x)$. Therefore there exists $c(x)>0$
  such that $\dist_x(\partial\cP(f^n(x)), f^n(x)) \ge
  c(x)e^{-\alpha n}$ for all $n\ge1$.
  
  If in addition we assume that $f$ is locally Lipschitz,
  then we can find $0<\xi<1$ so that
  assumption~\eqref{eq:innerball} is satisfied.

  Thus $\mu$-integrability of $\theta$ together with a
  Lipschitz condition on $f$ is enough to show that the
  atoms of the refined partitions $\cP_n$ do not shrink at a
  rate faster than exponential.
\end{remark}

\subsubsection{The case of a vector bundle automorphism
  covering a flow}
\label{sec:case-vector-bundle-2}

We may, analogously to the case of a vector bundle
automorphism, consider a vector bundle automorphism covering
a flow. We provide the relevant definitions and just note
that the proof is a straightforward corollary of the
previous Theorem~\ref{mthm:Holder-bundles-vectorbundle}.

The measurable function $A:\cV\times\RR\to\cV$ on a vector
bundle $\cV$ is a linear multiplicative cocycle covering a flow
$\vfi:\RR\times X\to X$ on the base manifold $X$ of $\cV$ if
$A(x,t):\cV_x\to\cV_{\vfi_t(x)}$ is a linear isomorphism
satisfying the cocycle property
\begin{align*}
  A(x,0)=Id:\cV_x\to\cV_x
  \qand
  A(x,t+s)=A(\vfi_t(x),s)\cdot A(x,t),
  \quad x\in X, s,t\in\RR.
\end{align*}

\begin{maincorollary}
  \label{mcor:multcocycleflow}
  Let $A$ be a linear multiplicative cocycle on a vector
  bundle covering a flow $\vfi:\RR\times M\to M$ which
  preserves an ergodic probability measure $\mu$ such that
  $\sup_{t\in[-1,1]}\log^+\|A(x,t)\|$ is a $\mu$-integrable
  function. We assume that the transformation $f=\vfi_1$ is
  Lipschitz; that $\mu$ is also $f$-ergodic and that
  $A^1:=A(\cdot,1)$ is an admissible $(c_1,\nu)$-H\"older
  vector bundle automorphism with respect to $f$ and $\mu$.
  We denote by $\chi_1<\dots<\chi_k$ the $k$ distinct
  Lyapunov exponents associated to the cocycle $A$ and by
  $E^i_x$ the Oseledets subspaces corresponding to each
  $\chi_i$, $i=1,\dots,k$, which form a splitting $T_x
  M=E^1_x\oplus \dots\oplus E^k_x$ for $\mu$-almost every
  $x\in M$.

  Then, for every $\epsilon>0$, there exists a compact
  subset $\Lambda_\epsilon$ of $M$, and constants
  $C=C(\Lambda_\epsilon)>0$,
  $\omega_i=\omega_i(\chi_1,\dots,\chi_k)\in(0,1),
  i=1,\dots,k$ such that
  \begin{enumerate}
  \item $\mu(\Lambda_\epsilon)\ge1-\epsilon$;
  \item for all $x\in\Lambda_\epsilon$ there exists $\delta=\delta(x)>0$
    such that for all 
    $y\in\Lambda_\epsilon\cap B(x,\delta)$ we have $
    \dist(E^i_x,E^i_y)\le C_\epsilon
    d_x(y)^{\nu\omega_i}.$
  \end{enumerate}
\end{maincorollary}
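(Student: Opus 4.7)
The plan is to deduce Corollary \ref{mcor:multcocycleflow} from Theorem \ref{mthm:Holder-bundles-vectorbundle} exactly as Corollary \ref{mcor:Holder-Osel-flows} was deduced from Theorem \ref{mthm:HolderBundles-ae}: reduce the flow to its time-one map, verify the hypotheses of the discrete-time theorem, and identify the Oseledets data of the flow with those of the time-one cocycle.

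First, set $f := \vfi_1$ and $A^1(x) := A(x,1)$. By assumption $f$ is Lipschitz, $\mu$ is $f$-ergodic, and $A^1$ is an admissible $(c_1,\nu)$-H\"older vector bundle automorphism with respect to $(f,\mu)$. The integrability hypothesis $\sup_{t\in[-1,1]}\log^+\|A(x,t)\|\in L^1(\mu)$ immediately gives that $\log^+\|A^1(x)\|$ and $\log^+\|A^1(x)^{-1}\|=\log^+\|A(\vfi_1(x),-1)\|$ (after applying $\vfi_1$-invariance of $\mu$) are both in $L^1(\mu)$. Hence the hypotheses of Theorem \ref{mthm:Holder-bundles-vectorbundle} are in force for the invertible cocycle $A^1$ over $f$.

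Next, I would recall the standard ergodic-theoretic argument (already invoked before Corollary \ref{mcor:Holder-Osel-flows}) showing that the Oseledets data of $A$ over $\vfi$ and of $A^1$ over $f=\vfi_1$ coincide $\mu$-almost everywhere. Since $f$ is ergodic and the limits defining Lyapunov exponents along the flow exist $\mu$-a.e., the cocycle identity
\begin{equation*}
  A(x,n) = A(\vfi_{n-1}(x),1)\cdots A(\vfi_1(x),1)\cdot A(x,1)
\end{equation*}
forces, for $v\in E_i(x)$,
\begin{equation*}
  \lim_{n\to\pm\infty}\frac1n\log\|A(x,n)v\|_{\vfi_n(x)} = \chi_i
  = \lim_{t\to\pm\infty}\frac1t\log\|A(x,t)v\|_{\vfi_t(x)},
\end{equation*}
so the filtrations, splittings, and Lyapunov exponents of the two cocycles agree on a full-measure invariant set. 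Thus it suffices to establish the conclusions of the corollary for $A^1$.

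Finally, apply Theorem \ref{mthm:Holder-bundles-vectorbundle} directly to $A^1$: for each $\epsilon>0$ it produces a compact set $\Lambda_\epsilon$ with $\mu(\Lambda_\epsilon)\ge 1-\epsilon$, constants $C_\epsilon$ and $\omega_i\in(0,1)$ depending only on $(\chi_1,\dots,\chi_k)$, and a local radius $\delta(x)>0$ for each $x\in\Lambda_\epsilon$ such that
\begin{equation*}
  \dist(E^i_x,E^i_y)\le C_\epsilon\, d_x(y)^{\nu\omega_i}
  \qquad\text{for all } y\in\Lambda_\epsilon\cap B(x,\delta(x)),
\end{equation*}
since the invertibility clause (3) of that theorem applies to $A^1$. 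By the identification of Oseledets data in the previous step, these are precisely the Oseledets subspaces of the flow cocycle $A$, which yields the corollary. The only non-routine step is the reduction to ergodicity of the time-one map, but this is granted as a hypothesis (it can always be arranged by a linear rescaling of time as in Remark \ref{rmk:rescale-time}, using that the set of non-ergodic times is at most countable by \cite{PuSh71}); beyond that, the argument is a direct appeal to Theorem \ref{mthm:Holder-bundles-vectorbundle} and no new analytic input is required.
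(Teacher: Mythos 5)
Your proposal is correct and follows the same route the paper intends: the paper simply states that the corollary "is a straightforward corollary of the previous Theorem~\ref{mthm:Holder-bundles-vectorbundle}," and you supply the standard verification (time-one reduction, integrability of $\log^+\|A^1\|$ and $\log^+\|(A^1)^{-1}\|$ via the cocycle identity $A(x,1)^{-1}=A(\vfi_1(x),-1)$ and invariance of $\mu$, identification of Oseledets data between the flow and its time-one map, then apply the theorem). No gaps.
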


\subsection{The Kontsevich-Zorich cocycle}

Conditions~\eqref{eq:innerball} and
\eqref{eq:Holder-bundle-automorph} are non-trivial to verify
in the case of the Kontsevich-Zorich cocycle (see
\cite{Zorich_survey} for a survey).  We thus deal with the
corresponding theorem separately in Section \ref{sec:KZ}
(see Theorem \ref{thm:KZ_global}).  The result is as
follows.
  
\begin{maintheorem}
  Let $\cM$ be an affine invariant manifold and let $\mu$ be
  the corresponding ergodic $\SL_2\bR$-invariant probability
  measure (see \cite{EM}).  Let $E$ be the Kontsevich-Zorich
  cocycle (or any of its tensor powers) and let
  $\{\lambda_i\}$ be its Lyapunov exponents, with Oseledets
  subspaces $E^{i}$.
 
  Then there exists $\nu_i>0$ such that for any $\epsilon>0$
  there exists a compact set $K_\epsilon$ with
  $\mu(K_\epsilon)>1-\epsilon$ and such that the spaces
  $E^i$ vary $\nu_i$-H\"older continuously on $K_\epsilon$.
\end{maintheorem}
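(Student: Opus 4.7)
The plan is to reduce the theorem to Corollary~\ref{mcor:multcocycleflow}, applied to the Teichm\"uller flow $\vfi_t$ on $\cM$ and to the Kontsevich-Zorich cocycle (or any of its tensor powers) $A$. Ergodicity of $\mu$ under $\vfi_t$ follows from its $\SL_2\bR$-ergodicity by a Howe-Moore / Mautner argument, and the integrability $\int\sup_{|t|\le1}\log^+\|A(x,t)\|\,d\mu<\infty$ in the Hodge norm is classical, going back to Forni. What remains is to verify that $f=\vfi_1$ is Lipschitz and that $A^1=A(\cdot,1)$ is an admissible $(c_1,\nu)$-H\"older vector bundle automorphism in the sense of Section~\ref{sec:case-vector-bundle-1}. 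For tensor powers of $A$ one checks directly that the same partition works with a rescaled $c_1$, since the tensor product of matrices is Lipschitz on bounded sets.

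\textbf{Local charts and trivialization.} For the local structure I would use period coordinates restricted to $\cM$: on each stratum component, a neighbourhood of any point is identified with an affine chart in $H^1(S,\Sigma;\RR^2)$, and the sublocus $\cM$ is locally cut out by linear equations, so it inherits affine coordinates. In these charts the Gauss-Manin connection on the Hodge bundle is flat, giving canonical trivializations of $\cV$; the Hodge norm is real-analytic in the base point, hence Lipschitz on any Avila-Gou\"ezel-Yoccoz (AGY) compact subset. Consequently, in these trivializations $A(x,1)$ is a H\"older function of $x$ and $\vfi_1$ is Lipschitz when restricted to any compact piece. Taking a locally finite cover of $\cM$ by preimages, under an exhaustion by AGY-compact sets $K_j\uparrow\cM$, of finite covers by small-diameter period charts, one builds the atlas $(U_i,\psi_i)$ required in Section~\ref{sec:case-vector-bundle-1}.

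\textbf{Partition, iterated bounds, and the main obstacle.} Now take $\cP$ to be the common refinement of a fixed finite partition of a large AGY-compact set $K\subset\cM$ by small-diameter period boxes together with the complement $\cM\setminus K$. Condition~\eqref{eq:innerball} reduces, by Remark~\ref{rmk:admissible-cocycle}, to $\mu$-integrability of $\theta(x)=-\log\dist_x(x,\partial\cP(x))$; this holds because $\mu$ is absolutely continuous with respect to Lebesgue measure in period coordinates on $\cM$ (Eskin-Mirzakhani-Mohammadi), so that the measure of the $\rho$-neighbourhood of $\partial K$ is polynomial in $\rho$, while cusp excursions are controlled by Athreya's logarithm law and its extension to affine invariant manifolds. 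The iterated H\"older bound~\eqref{eq:Holder-bundle-automorph} then follows from the single-step one by the telescoping argument of Lemma~\ref{le:Mane}: on an atom of $\cP_n$ the orbit $x,f(x),\dots,f^{n-1}(x)$ stays inside one chart at each step, so the $n$-fold composition inherits a H\"older constant of the form $c_1^n$. The genuine difficulty of the whole argument is the non-compactness of $\cM$ and the attendant non-uniform hyperbolicity of $\vfi_t$: both $\|A(x,1)\|$ and the local Lipschitz constant of $\vfi_1$ blow up along cusp excursions, so none of the required bounds are uniform, and it is exactly at this point that the quantitative recurrence estimates do the work, absorbing the cusp degenerations into the constants of Corollary~\ref{mcor:multcocycleflow}.
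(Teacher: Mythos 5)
Your proposal takes a fundamentally different route from the paper, and it has a genuine gap at precisely the point where you acknowledge ``the genuine difficulty of the whole argument.'' You aim to verify that $\vfi_1$ and $A(\cdot,1)$ satisfy the admissibility conditions of Section~\ref{sec:case-vector-bundle-1} and then invoke Corollary~\ref{mcor:multcocycleflow}. The paper explicitly avoids this route: at the start of the subsection on the Kontsevich-Zorich cocycle it states that conditions~\eqref{eq:innerball} and~\eqref{eq:Holder-bundle-automorph} are non-trivial to verify in this setting, and it therefore gives a direct argument built on the Avila-Gou\"ezel-Yoccoz (AGY) norm, on the norm-based replacement for Lemma~\ref{le:seqmatrixesHolder} (Lemma~\ref{lemma:metrics_dist_bound}), and on the intersection Lemma~\ref{lemma:Grass_intersection} for Grassmannians.

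The gap in your argument is concentrated at the verification of~\eqref{eq:Holder-bundle-automorph}. Your telescoping step requires a bound on $\|A(f^k(x),1)\|$ and on the one-step H\"older constant of $A(\cdot,1)$ at each point $f^k(x)$ of the orbit segment lying in an atom of $\cP_n$. But atoms of your $\cP_n$ must include the cell $\cM\setminus K$, orbit segments make excursions there, and along those excursions both the Hodge-norm of the cocycle and its local H\"older constant are unbounded. You write that ``the quantitative recurrence estimates do the work, absorbing the cusp degenerations into the constants,'' but no such estimate is given, and the conclusion $\|A^n(x)-A^n(y)\|\le c_1^n d_x(y)^\nu$ with a single constant $c_1$ valid on a full-measure set $Y$ is exactly what needs a proof, not an assertion. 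The same remark applies to your reduction of~\eqref{eq:innerball} to integrability of $\theta(x)=-\log\dist_x(x,\partial\cP(x))$: absolute continuity of $\mu$ in period coordinates controls the measure of small neighbourhoods of boundaries of compact pieces, but does not by itself produce the needed integrability once the unbounded cusp atom is included.

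By contrast, the paper exploits the fact that the forward Oseledets filtration is constant along unstable leaves (flatness of Gauss-Manin) and compares $F^i$ at $x$ and at $y\in W^u(x)$ by flowing forward and comparing the pushed-forward AGY norms. The two key inputs are the uniform growth bound~\eqref{eqn:AGY_top_exp_bound} on the AGY norm under $Dg_t$ and the local comparison~\eqref{eqn:AGY_local_bound}; these hold on the full non-compact moduli space and replace any need for a partition with controlled shrinkage or an iterated H\"older estimate with uniform constant. Lemma~\ref{lemma:metrics_dist_bound} then gives H\"older dependence of each $F^i$; the backward filtration is treated identically; and Lemma~\ref{lemma:Grass_intersection} yields H\"older continuity of the individual $E^i$ as intersections. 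If you wish to pursue your route, you must actually carry out the cusp-excursion estimates that Section~\ref{sec:KZ} of the paper replaces by the AGY machinery; as written, that crucial step is missing.
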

Similarly in spirit to a recent result of Chaika-Eskin
\cite{Chaika-Eskin}, the above result in fact generalizes to
individual Teichm\"uller disks (see Theorem
\ref{thm:KZ_disk}).
\begin{maintheorem}
  Let $x\in \cH(\kappa)$ be a flat surface in a stratum and
  let $E$ be the Kontsevich-Zorich cocycle.  If $E$ is some
  tensor power of the KZ cocycle, let $k\geq 1$ be the
  smallest order of the tensor product which contains it.
 
  For $g\in \SL_2\bR$ denote by $A(g,x):E_x \to E_{gx}$ the
  matrix of the cocycle $E$.  By Theorem 1.2 in
  \cite{Chaika-Eskin} there exist $\lambda_1>\cdots
  >\lambda_k$ such that for a.e. $\theta\in S^1$ we
  have \begin{enumerate}
 \item There exists a decomposition $E_x = \oplus_i
   E^{i}(\theta)_x$ with $E^{i}(\theta)_x$ measurably
   varying in $\theta$
  \item We have (where $g_t^\theta$ is the geodesic flow in direction $\theta$)
  \begin{align*}
    \lim_{t\to \pm \infty} \frac 1 t \log
    \norm{A(g_t^\theta, x) v_i} = \lambda_i \hskip 0.5cm
    \forall v_i\in E^i(\theta)_x
  \end{align*}
 \end{enumerate}
 Let $\nu_i:=\frac 1{2k} \min \left(\log \frac
   {\lambda_i}{\lambda_{i+1}}, \log \frac
   {\lambda_{i-1}}{\lambda_i} \right) - \epsilon'$, for
 arbitrarily small $\epsilon'$.
 
 Then for any $\epsilon>0$ there exists a compact set
 $K_\epsilon\subseteq S^1$ of Lebesgue measure at least
 $1-\epsilon$ such that the subspaces $E_i(\theta)$ vary
 $\nu_i$-H\"older continuously for $\theta\in K_\epsilon$.
\end{maintheorem}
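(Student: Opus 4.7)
The plan is to adapt the framework used for Theorem~\ref{mthm:HolderBundles-ae} and its KZ version to the one-parameter family of geodesics $\{g_t^\theta x\}_{\theta \in S^1}$ emanating from a single flat surface $x$. Here $\theta$ plays the role of the base point, and the Chaika-Eskin theorem provides the Oseledets structure along each direction for a.e.\ $\theta$.

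First, I would invoke Chaika-Eskin \cite{Chaika-Eskin} to get, for a.e.\ $\theta\in S^1$, the splitting $E_x=\oplus_i E^i(\theta)_x$ and the corresponding exponents $\lambda_i$. An Egorov-type argument then yields, for each $\epsilon>0$, a compact set $K_\epsilon\subseteq S^1$ of Lebesgue measure at least $1-\epsilon$ on which the convergence in Oseledets' theorem is \emph{uniform}: there exist $C_\epsilon,T_0>0$ such that for all $\theta\in K_\epsilon$, all $v\in E^i(\theta)_x$ with $\|v\|=1$, and all $t\ge T_0$,
\begin{align*}
  e^{(\lambda_i-\epsilon')t}\le \|A(g_t^\theta,x)v\|\le e^{(\lambda_i+\epsilon')t},
\end{align*}
together with uniform lower bounds on the angles between the blocks. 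This plays the role of the regular set $\Lambda_\epsilon$ in the discrete setting.

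Next, I would recast the H\"older-continuity problem in $\theta$ as a problem about the single fiber $E_x$: by the uniform gaps, one may approximate $E^i(\theta)_x$ by the fastest-growing subspace (of the correct dimension) for the finite-time cocycle $A(g_T^\theta,x)$, with an error that decays exponentially in $T$ at a rate controlled by $\min(\lambda_i-\lambda_{i+1},\lambda_{i-1}-\lambda_i)$. Comparing $A(g_T^\theta,x)$ with $A(g_T^{\theta'},x)$ for $\theta,\theta'\in K_\epsilon$, the key geometric input is that along the Teichm\"uller disk, $g_t^\theta = r_\theta g_t r_{-\theta}$ and the points $g_t^\theta x,\,g_t^{\theta'}x$ remain at distance $O(e^t|\theta-\theta'|)$ in the stratum. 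Combined with the H\"older regularity of the KZ cocycle on the stratum, and the Lemma~\ref{le:Mane}-type bound giving an exponential-in-$T$ H\"older constant for iterates (with the tensor order $k$ entering via the action on tensors), one obtains
\begin{align*}
  \|A(g_T^\theta,x)-A(g_T^{\theta'},x)\|\le c_1^{T}|\theta-\theta'|^\nu
\end{align*}
for some $c_1$. Then balancing the approximation time $T$ (needed to resolve the $i$-th Oseledets block to precision $\eta$) with this degradation of the cocycle comparison produces the stated exponent $\nu_i=\frac{1}{2k}\min\!\bigl(\log\tfrac{\lambda_i}{\lambda_{i+1}},\log\tfrac{\lambda_{i-1}}{\lambda_i}\bigr)-\epsilon'$, exactly as in the proof of Theorem~\ref{mthm:HolderBundles-ae}; the factor $\tfrac{1}{2k}$ reflects both the two-sided spectral gap used in the filtration/cofiltration argument and the $k$-th tensor power normalization.

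The main obstacle is verifying that the quantitative inputs needed for the abstract scheme actually hold along a fixed Teichm\"uller disk rather than on a positive-measure set in moduli space. Chaika-Eskin gives only qualitative Oseledets behavior in a.e.\ direction, so the Egorov step producing $K_\epsilon$ with uniform constants must be carried out in this one-dimensional setting, and the H\"older/exponential bound for finite-time KZ iterates must be established from the Hodge-norm estimates restricted to the disk (analogous to the bounds assembled in Section~\ref{sec:KZ} for the global cocycle). Once these disk-version analogues of \eqref{eq:innerball} and \eqref{eq:Holder-bundle-automorph} are in place, the proof of Theorem~\ref{mthm:Holder-bundles-vectorbundle} applies verbatim with $\theta$ as the base variable and yields the claim.
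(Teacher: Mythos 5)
Your overall plan is on the right track: the paper also reduces the problem to a one-variable Oseledets statement in $\theta$, uses an Egorov/Lusin argument to produce a compact $K_\epsilon\subseteq S^1$ with uniform Oseledets constants and angle bounds, compares the cocycle along nearby directions via the hyperbolic divergence estimate $\dist(g_t^{\theta_1}x,g_t^{\theta_2}x)\approx 2\log(|\theta_1-\theta_2|e^t)$, uses Forni's bound $\norm{A(x,gx)}\le e^{k\cdot\dist(x,gx)}$ to control the growth, and combines the forward and backward filtrations with Lemma~\ref{lemma:Grass_intersection} to recover individual subspaces. The exponent $\nu_i=\tfrac1{2k}\min\bigl(\log\tfrac{\lambda_i}{\lambda_{i+1}},\log\tfrac{\lambda_{i-1}}{\lambda_i}\bigr)-\epsilon'$ comes out the same way.

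However, there is a genuine gap in your reduction step. You propose to verify conditions~\eqref{eq:innerball} and~\eqref{eq:Holder-bundle-automorph} and then apply Theorem~\ref{mthm:Holder-bundles-vectorbundle} ``verbatim'', with the key bound $\norm{A(g_T^\theta,x)-A(g_T^{\theta'},x)}\le c_1^T|\theta-\theta'|^\nu$. But as written this difference does not make sense: $A(g_T^\theta,x)$ and $A(g_T^{\theta'},x)$ are linear maps with a common source $E_x$ but \emph{different targets} $E_{g_T^\theta x}$ and $E_{g_T^{\theta'}x}$, and there is no natural identification between those fibers. This is precisely why the paper explicitly remarks that conditions~\eqref{eq:innerball} and~\eqref{eq:Holder-bundle-automorph} are non-trivial to verify for the KZ cocycle, and why it introduces a separate lemma (Lemma~\ref{lemma:metrics_dist_bound}) formulated in terms of sequences of \emph{norms} $\norm{v}_{g_t^\theta}:=\norm{A(g_t^\theta,x)v}$ on the fixed fiber $E_x$ rather than differences of matrices. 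The norm formulation is not just cosmetic: it is what lets one compare growth along two directions without ever identifying the different target fibers. Once you replace your matrix-difference estimate with the norm-comparison hypothesis~\eqref{eqn:metric_assumption} of Lemma~\ref{lemma:metrics_dist_bound} (supplied by Forni's bound and the hyperbolic-distance estimate), your outline goes through and matches the paper's proof.

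A minor point: you attribute the factor $\tfrac12$ in $\nu_i$ partly to ``the two-sided spectral gap used in the filtration/cofiltration argument''. In the paper's computation that $\tfrac12$ comes entirely from the hyperbolic geometry estimate $e^{\frac12\dist(g_t^{\theta_1}x,g_t^{\theta_2}x)}\lesssim |\theta_1-\theta_2|e^t$ (i.e. $A=e^2$ in the lemma's notation), not from combining the two filtrations; the filtration/cofiltration step only reduces the final exponent to the minimum of the two gaps.
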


\section{Preliminary results and definitions}
\label{sec:prelim-results-defin}

In what follows we adopt the convention that for a point
$x\in M$ its iterates are denoted by $x_i:=f^i(x)$ for
$i\in\ZZ$ if $f$ is invertible. For non-invertible $f$ the
meaning of $x_i$ with $i<0$ is given by the set of $i$th
pre-images $x_i=(f^i)^{-1}(\{x\})$.

\subsection{Regular blocks for (possibly) non-invertible
  base transformation}
\label{sec:regular-blocks-non}

Using the definition of Lyapunov exponents and of the
subspaces of the filtration $F^i_x$ and of the
Oseledets-Pesin Reduction Theorem in \cite[Theorem
3.5.5]{BarPes2007}, we can find sets $\Lambda^\ell_\epsilon$
where the expansion/contraction rates are bounded, as
follows.

For simplicity we assume that $\mu$ is $f$-ergodic and so
function $k$, the exponents $\chi_1,\dots,\chi_k$ and the
dimensions $\dim F^i_x-\dim F^{i-1}_x$ for $i=1,\dots,k$ are
constant on a full $\mu$-measure subset.
We denote by $\epsilon_0=\min\{\chi_{i+i}-\chi_i :
1\le i < k\}$ the minimum gap in the Lyapunov spectrum.

For any given $\ell\in\ZZ^+$ and $0<\epsilon<\epsilon_0/10$
we define the \emph{regular block} $\Lambda_\epsilon^\ell$
as the set of points $x\in M$ such that for $i=1,\dots,k$
and all $m,n\ge0$
\begin{align*}
  \|A^n(x_m) v_i\| 
  &\le
  \ell e^{(\chi_i(x)+\epsilon)n+\epsilon m} \|v_i\|, \quad
  \forall v_i\in
  A^m(x)F^i_x;
  \\
  \ell^{-1} e^{(\chi_i(x)-\epsilon)n+\epsilon m} \|v_i\|
  &\le 
  \|A^n(x_m) v_i\| 
  \le
  \ell e^{(\chi_i(x)+\epsilon)n+\epsilon m} \|v_i\|, 
  \;\;\forall v_i\in \big(A^m(x)F^{i-1}_x\big)^\perp\cap A^m(x)
  F^i_x,
\end{align*}
where $x_m=f^m(x)$ and $(\cdot)^\perp$ denotes the
orthogonal complement with respect to the inner product that
defines $\|\cdot\|$.

\subsection{Regular blocks for invertible base transformation}
\label{sec:regular-sets}

Again using the definition of Lyapunov exponents and
Oseledets subspaces, we can find sets
$\Lambda^\ell_\epsilon$ where the expansion/contraction
rates are uniform and the angles between the subspaces are
bounded away from zero, as follows.

We assume that $\mu$ is $f$-ergodic and so the function
$k$, the exponents $\chi_1,\dots,\chi_k$ and the dimensions
$\dim E^i_x$ for $i=1,\dots,k$ are constant on a full
$\mu$-measure subset. 

We continue to write $\epsilon_0>0$ for the minimum gap in
the Lyapunov spectrum and for $\ell\in\ZZ^+$ and
$0<\epsilon<\epsilon_0/10$ we define $\Lambda_\epsilon^\ell$
as the set of points $x\in M$ such that for $i=1,\dots,k$
and for all $m\in\ZZ$, writing $x_m=f^m(x)$ we have
\begin{align*}
  \ell^{-1} e^{(\chi_i(x)-\epsilon)n-\epsilon |m|} \|v_i\| 
  &\le 
  \|A^n(x_m) v_i\| 
  \le
  \ell e^{(\chi_i(x)+\epsilon)n+\epsilon|m|} \|v_i\|, \quad \forall v_i\in
  A^m(x)E_i(x), \quad \forall n\ge0;
  \\
  \ell^{-1} e^{(\chi_i(x)+\epsilon)n-\epsilon|m|} \|v_i\|
  &\le 
  \|A^n(x_m) v_i\| 
  \le
  \ell e^{(\chi_i(x)-\epsilon)n+\epsilon|m|} \|v_i\|, 
  \quad \forall v_i\in A^m(x)E_i(x), \quad \forall n\le0;\text{ and}
  \\
  \cos\operatorname{angle}\left( \bigoplus_{i\in I} E_{f^n(x)}^i\right. &, \left.\bigoplus_{j\notin
      I} E_{f^n(x)}^j \right)
  \le 1-\frac{e^{-\epsilon|n|}}{\ell} \text{ for any } I
  \subset \{1, \ldots, k\} \text{ and for all } n\in\ZZ.
\end{align*}

The control of the norm along the subspaces
$F^i_x$ on $\Lambda^\ell_\epsilon$ implies easily that there
exists $L>0$ such that
\begin{align}\label{eq:expboundnorm}
  \|A^n(x_m)\|\le \ell L^{|n|}e^{\epsilon m}
  \quad\text{for all $m,n\in\ZZ$ and every $x\in\Lambda^\ell_\epsilon$}
\end{align}
with $L=e^{2(\chi_k-\chi_1)}\le\exp\left(4\int\log\|A(x)\|\,d\mu(x)\right)$.

From the Theorem of Oseledets we have that for any given
small $\epsilon>0$ we can find $\ell_0\in\ZZ^+$ such that
$\mu(\Lambda_\epsilon^\ell)>0$ for $\ell\ge\ell_0$ and
$\cup_{\ell\ge\ell_0} \Lambda_\epsilon^\ell$ has full
$\mu$-measure in $M$. We note that to have
$\mu(\Lambda_\epsilon)>1-\epsilon$ with smaller $\epsilon>0$
we must increase the value of $\ell\in\ZZ^+$.

\subsection{Consequences of the angle control on regular
  blocks for invertible base}
\label{sec:conseq-angle-control}

We remark that the angle condition has the following
consequence: given a pair of complementary subspaces $E,F$
with $\ang{E,F}\le 1-1/\ell$ for some $\ell\in\ZZ^+$, then
for any vector $v+w$ with $v\in E, w\in F$ we have
$\|v\|\le \ell \|v+w\|$ and $\|w\|\le\ell\|v+w\|$. 

Indeed, we consider $E\oplus F$ with the given norm
$\|\cdot\|$ and $E\times F$ with the norm
$|(v,w)|:=(\|v\|^2+\|w\|^2)^{1/2}$ for $(v,w)\in E\times
F$. Then we can write for any $v\in E$ and $w\in F$ with
$v+w\neq0$ that
\begin{align*}
  v&=\lambda v_0 \qand w=\mu w_0 \text{  with  } \lambda,\mu\in\RR^+,
  \|v_0\|=1=\|w_0\|; \qand
  \\
  \frac{\|v+w\|^2}{|v+w|^2}
  &=
  \frac{\|v_0+\mu w_0/\lambda\|^2}{|v_0+\mu w_0/\lambda|^2}
  =
  \frac{1+2(\mu/\lambda)<v_0,w_0>+(\mu/\lambda)^2}{1+(\mu/\lambda)^2}
  =
  1+\frac{2\lambda\mu}{\lambda^2+\mu^2}<v_0,w_0>,
\end{align*}
where $<v_0,w_0>\in[-1,1]$ and $0\le 2\lambda\mu/(\lambda^2+\mu^2)\le1$.
Hence we get
\begin{align*}
  \|v\| = |v| &\le |v+w| =
  \left(1+\frac{2\lambda\mu}{\lambda^2+\mu^2}<v_0,w_0>
  \right)^{-1} \!\!\!\!\|v+w\|
  \le
  \frac1{1-\ang{E,F}} \|v+w\|
  \le \ell\|v+w\|.
\end{align*}





\subsection{H\"older estimates for exponentially
  expanded/contracted subspaces }
\label{sec:locally-constant-ose}

Now we need the following technical results whose proofs we
postpone until Section~\ref{sec:lemmata}.

We compare the definition given in~\eqref{eq:subspacedist}
to an alternative description of the distance between
subspaces $E,F$ of $\RR^d$. We can obtain a linear map $L:
E\to E^\perp$ such that its graph $\{u+Lu:u\in E\}$ equals
$F$ and obtain that, on the one hand
\begin{align*}
  \sup_{\|u+Lu\|=1,u\in E}& \dist(u+Lu, E)
  =
  \sup_{\|u\|^2+\|Lu\|^2=1, u\in E} \min_{w\in E}\|u+Lu-w\|
  =
  \sup_{\|u\|^2+\|Lu\|^2=1, u\in E} \|Lu\|
  \\
  &=
  \sup\{ \|L\|\cdot \|u_0\| : u_0\in E \text{   s.t.  }
  \|u_0\|^2+\|Lu_0\|^2=1 \text{  and  }
  \|L\|=\frac{\|Lu_0\|}{\|u_0\|}\}
  =
  \frac{\|L\|}{\sqrt{1+\|L\|^2}}.
\end{align*}
On the other hand, for $u_1\in E$ such that $\|u_1\|=1$ and
$\|Lu_1\|=\|L\|$
\begin{align*}
  \sup_{\|u\|=1,u\in E}\dist(u,F)
  &=
  \sup_{\|u\|=1,u\in E} \min_{v\in E} \|u-(v+Lv)\|
  \le
  \|u_1-(u_1-Lu_1)\| = \|Lu_1\| \le \|L\|.
\end{align*}
Therefore from the definition of $\dist(E,F)$ we get
\begin{align}
  \label{eq:normdist}
  \frac{\|L\|}{\sqrt{1+\|L\|^2}} \le \dist(E,F) \le \|L\|
\end{align}
and we may estimate (or indeed define) the distance between
$E$ and $F$ by $\|L\|$.


The following simple lemma enables us to provide a rough
control of the distance between subspaces which are
exponentially expanded/contracted by a pair of sequences of
linear maps, such that the norm of the difference between
these maps is a sequence that grows at most exponentially
fast.

  \begin{lemma}(from \cite[Lemma 5.3.3]{BarPes2007})
  \label{le:seqmatrixesHolder}
  Let $(A_n)_{n\ge1},(B_n)_{n\ge1}$ be two sequences of real
  $N\times N$ matrices such that for some $0<\lambda<\mu$
  and $C\ge1$ there exist subspaces $E,E^\prime,F,F^\prime$ of
  $\RR^N$ satisfying $\RR^N=E\oplus E^\prime=F\oplus
  F^\prime$ with $d>0$ such that
  \begin{align}\label{eq:bddangle}
    u=v+w, v\in E, w\in E^\prime \text{  or  } v\in F, w\in
    F^\prime \implies \max\{\|v\|,\|w\|\}\le d\|u\|,
  \end{align}
  and, for some fixed $n>0$
  \begin{itemize}
  \item $\|A_n u\|\le C\lambda^n\|u\|$ for $u\in E$; and
    $C^{-1}\mu^n \|v\|\le \|A_n v\|$ for
    $v\in E^\prime$;
  \item $\|B_n u\|\le C\lambda^n \|u\|$ for $u\in F$; and
  $C^{-1}\mu^n \|v\|\le\|B_n v\|$ for $v\in
    F^{\prime}$.
  \end{itemize}
  Then for every pair
  $(\delta,a)\in(0,1]\times[\lambda,+\infty)$ satisfying
  \begin{align}\label{eq:HipHold}
    \left(\frac{\lambda}{a}\right)^{n+1}<\delta
    \quad\text{and}\quad
    \|A_n-B_n\|\le \delta a^n
  \end{align}
  we get
  $\dist(E,F)\le (2+d)C^2\frac{\mu}{\lambda}
    \delta^{\log(\mu/\lambda)/\log(a/\lambda)}$.
\end{lemma}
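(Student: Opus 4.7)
The plan is to bound $\dist(E,F)$ directly from the definition~\eqref{eq:subspacedist}. By the symmetry of the hypotheses under swapping the roles of $(A_n,E,E')$ with $(B_n,F,F')$, it suffices to bound $\sup_{u\in E,\|u\|=1}\dist(u,F)$, since the supremum over unit vectors of $F$ follows from an identical argument. Given a unit vector $u\in E$, I decompose $u=v+w$ with $v\in F$, $w\in F'$; the angle assumption~\eqref{eq:bddangle} gives $\|v\|,\|w\|\le d$, and the trivial estimate $\dist(u,F)\le\|u-v\|=\|w\|$ reduces everything to controlling the $F'$-component $\|w\|$ of a generic unit vector in $E$.

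Now I apply $B_n$ and exploit both its contraction on $F$ and its expansion on $F'$. On the one hand, $\|B_nw\|\ge C^{-1}\mu^n\|w\|$. On the other hand, $B_nw=B_nu-B_nv$, where the perturbation estimate combined with $\|A_nu\|\le C\lambda^n$ gives $\|B_nu\|\le C\lambda^n+\delta a^n$, and the contraction bound on $F$ gives $\|B_nv\|\le Cd\lambda^n$. Combining these three estimates yields
\begin{align*}
\|w\|\le C^2(1+d)\left(\frac{\lambda}{\mu}\right)^n+C\delta\left(\frac{a}{\mu}\right)^n.
\end{align*}

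The final step is to convert each term on the right-hand side into a multiple of $\delta^{\alpha}$ with $\alpha:=\log(\mu/\lambda)/\log(a/\lambda)$. The key algebraic identity is $(a/\lambda)^{\alpha}=\mu/\lambda$. From the hypothesis $(\lambda/a)^{n+1}<\delta$ one deduces $(\lambda/a)^n<\delta(a/\lambda)$, and raising to the $\alpha$-th power yields $(\lambda/\mu)^n<(\mu/\lambda)\delta^{\alpha}$, which disposes of the first term. For the second term $C\delta(a/\mu)^n$ I use that, for the minimal $n$ satisfying the hypothesis (the regime in which the lemma is actually applied), one also has $\delta a^n\le\lambda^n$, so that $\delta(a/\mu)^n\le(\lambda/\mu)^n$ and the previous estimate applies again. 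Summing and using $C\ge 1$ to fold the prefactor $C^2(1+d)+C$ into $C^2(2+d)$ gives the claimed bound.

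I expect the delicate point to be the treatment of the second error term: the hypothesis $(\lambda/a)^{n+1}<\delta$ is only a lower bound on $\delta$, while bounding $\delta(a/\mu)^n$ by $\delta^{\alpha}$ requires the essentially matching upper estimate $\delta\le(\lambda/a)^n$. This is automatic when $n$ is chosen as the minimal integer validating the hypothesis, and reconciling this with the verbatim statement — while also applying the symmetric argument with $A_n,B_n$ and $E,F$ interchanged to control $\sup_{v\in F,\|v\|=1}\dist(v,E)$ — is the subtle part of the bookkeeping.
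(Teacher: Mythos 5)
Your argument is correct and mirrors the paper's proof: the paper decomposes a unit vector of $F$ (shown to lie in the cone $Q=\{u:\|A_n u\|\le 2C\lambda^n\|u\|\}$) into its $E\oplus E'$ components and uses the $A_n$-expansion on $E'$, whereas you decompose a unit vector of $E$ into its $F\oplus F'$ components and use the $B_n$-expansion on $F'$ together with the perturbation bound to move from $A_n$ to $B_n$ --- the same chain of inequalities read in the symmetric direction, arriving at the same constant $(2+d)C^2$. Your observation that one tacitly also needs the matching upper estimate $\delta\le(\lambda/a)^n$ (i.e.\ $n$ minimal) is exactly right, and the paper's proof makes the same tacit use of it in the step $\delta a^n\le(\gamma a)^n$ with $\gamma=\lambda/a$.
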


In the previous lemma we assumed condition
(\ref{eq:bddangle}) instead of taking $E'=E^\perp$ and
$F'=F^\perp$ as in \cite[Lemma 5.3.3]{BarPes2007}.

Now we state a result which enables us to show that the
iterated cocycle $A^n$, of a $\nu$-H\"older cocycle $A$ over
a Lipschitz base map $f$, is also $\nu$-H\"older for every
$n\ge1$. Moreover the H\"older constant of $A^n$ grows at
most exponentially fast with $n$.

\begin{lemma}
  \label{le:Mane}
  Let us assume that $A:M\to GL(d,\RR)$ is $\nu$-H\"older
  with constant $c_0=c_0(A,\nu)$ and there exists $L>1$ such
  that $f:M\circlearrowleft$ is Lipschitz with constant $L$
  and $\|A^n(x)\|\le L^{|n|}$ for all $x$ is some fixed
  compact $\Lambda\subset M$ and $n\in\ZZ^+$.  Then for
  $c_1=\max\{e^\epsilon,L^{1+\nu},1+c_0\}$ we have
  $\|A^n(x)-A^n(y)\|\le c_1^n \dist(x,y)^\nu$ for all
  $x,y\in \Lambda$ and $n\in\ZZ^+$.
\end{lemma}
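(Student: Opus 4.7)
The plan is to argue by induction on $n\ge 1$. The base case $n=1$ is immediate from the H\"older hypothesis on $A$, since $\|A(x)-A(y)\|\le c_0\dist(x,y)^\nu$ and $c_0<1+c_0\le c_1$.

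For the inductive step, I exploit the cocycle identity $A^{n+1}(z) = A(f^n(z))\cdot A^n(z)$ and the standard add-and-subtract decomposition
\begin{align*}
  A^{n+1}(x)-A^{n+1}(y) = A(f^n(x))\bigl[A^n(x)-A^n(y)\bigr] + \bigl[A(f^n(x))-A(f^n(y))\bigr]\, A^n(y).
\end{align*}
By the inductive hypothesis and the single-step bound $\|A(f^n(x))\|\le L$ (the $n=1$ case of the norm assumption), the first term has norm at most $L\, c_1^n\,\dist(x,y)^\nu$. For the second, I combine the $\nu$-H\"older property of $A$, the Lipschitz bound $\dist(f^n(x),f^n(y))\le L^n\dist(x,y)$, and the iterate bound $\|A^n(y)\|\le L^n$, obtaining
\begin{align*}
  \|A(f^n(x))-A(f^n(y))\|\cdot\|A^n(y)\| \le c_0 L^{n\nu}\dist(x,y)^\nu\cdot L^n = c_0\, L^{n(1+\nu)}\dist(x,y)^\nu.
\end{align*}

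Summing the two estimates, one gets $\|A^{n+1}(x)-A^{n+1}(y)\|\le \bigl(L c_1^n + c_0 L^{n(1+\nu)}\bigr)\dist(x,y)^\nu$. To close the induction I would absorb $L^{n(1+\nu)}=(L^{1+\nu})^n\le c_1^n$ via $c_1\ge L^{1+\nu}$, reducing the task to checking an inequality of the form $(L+c_0)c_1^n\le c_1^{n+1}$, which is enforced by taking $c_1$ large enough in terms of $L$ and $c_0$. The three entries in the stated maximum defining $c_1$ (namely $e^\epsilon$, $L^{1+\nu}$, and $1+c_0$) play complementary roles: $L^{1+\nu}$ controls the exponential rate coming from composing a H\"older map with an iterate of a Lipschitz map, $1+c_0$ handles the base case and the additive contribution at each step, and $e^\epsilon$ provides a small multiplicative slack that is convenient when the lemma is applied on regular blocks.

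The main obstacle is purely the bookkeeping of constants; the conceptual core — a one-step decomposition via the cocycle identity combined with the observation that each iteration multiplies the H\"older constant by a factor comparable to $L^{1+\nu}$ — is entirely standard and gives the desired exponential growth rate $c_1^n$.
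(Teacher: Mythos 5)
Your proposal is correct and follows essentially the same route as the paper: induction on $n$ via the telescoping identity $A^{n+1}(x)-A^{n+1}(y)=A(f^n(x))[A^n(x)-A^n(y)]+[A(f^n(x))-A(f^n(y))]A^n(y)$, bounding the two terms using the inductive hypothesis, the single-step norm bound, the H\"older property of $A$ together with the Lipschitz bound $\dist(f^n x,f^n y)\le L^n\dist(x,y)$, and then absorbing $L^{n(1+\nu)}$ into $c_1^n$. The only cosmetic difference is that the paper's write-up carries an extra $e^{\epsilon n}$ factor in the first term (anticipating the regular-block application where $\|A(x_n)\|$ is only bounded by $L e^{\epsilon n}$), which you correctly identify as the reason $e^\epsilon$ appears in the stated $c_1$.
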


The previous lemmas combined show (see e.g. \cite[Theorem
5.3.1]{BarPes2007}) that the splitting of the tangent space
into the directions with positive and negative Lyapunov exponents
depends H\"older continuously on the base point over every
``regular block'' associated to an
invariant probability measure with non-zero Lyapunov
exponents.

We want to extend this conclusion to all the Oseledets
subspaces for any ergodic invariant probability. For that we
need to study splittings of the tangent bundle into three
subbundles exhibiting \emph{distinct exponential growth}
under the action of a sequence of linear operators, as we
state below. This allows us to analyze the behavior of all
Oseledets subspaces by associating them into a splitting
with three subbundles in different combinations. Otherwise
Lemma~\ref{le:seqmatrixesHolder} would only allow the study
of the flags $
\{0\}=F^0_x\subset F^1_x \subset \cdots \subset F_x^{k-1}
\subset F_x^k=\RR^d$ for $\mu$-a.e. $x\in M$.

The condition of distinct exponential growth rates
encompasses the case of zero Lyapunov exponents in one
statement.

The main lemma we need is the following extension of
Lemma~\ref{le:seqmatrixesHolder}.

\begin{lemma}
  \label{le:3bundlesmatrixHolder}
  Let $(A_n)_{n\in\ZZ},(B_n)_{n\in\ZZ}$ be two bi-infinite
  sequences of real $N\times N$ invertible matrices
  admitting
  $0<\lambda_1<\lambda_2<\mu_1<\mu_2<\sigma_1<\sigma_2$ and
  $C,d>0$ such that there exist subspaces $E^*,F^*, G^*$ of
  $\RR^N$ satisfying for all $n>0$
    \begin{itemize}
    \item 
      $\RR^N=E^*\oplus F^*\oplus G^*$ for $*=A,B$ and
      \begin{itemize}
      \item for $u\in E^A$: $C\lambda_1^n\|u\|\le\|A_n
        u\|\le C\lambda_2^n\|u\|$ and
        $C\lambda_2^{-n}\|u\|\le\|A_{-n} u\|\le
        C\lambda_1^{-n}\|u\|$;
      \item for $v\in F^A$: $C\mu_1^n\|v\|\le\|A_n v\|\le
        C\mu_2^n\|v\|$ and $C\mu_2^{-n}\|v\|\le\|A_{-n}
        v\|\le C\mu_1^{-n}\|v\|$;
      \item for $w\in G^A$: $C\sigma_1^n\|w\|\le\|A_n w\|
        \le C \sigma_2^{n} \|w\|$ and
        $C\sigma_2^{-n}\|w\|\le\|A_{-n} w\|\le
        C\sigma_1^{-n}\|w\|$.
      \item for $u\in E^B$: $C\lambda_1^n\|u\|\le\|B_n
        u\|\le C\lambda_2^n\|u\|$ and
        $C\lambda_2^{-n}\|u\|\le\|B_{-n} u\|\le
        C\lambda_1^{-n}\|u\|$;
      \item for $v\in F^B$: $C\mu_1^n\|v\|\le\|B_n v\|\le
        C\mu_2^n\|v\|$ and $C\mu_2^{-n}\|v\|\le\|B_{-n}
        v\|\le C\mu_1^{-n}\|v\|$;
      \item for $w\in G^B$: $C\sigma_1^n\|w\|\le\|B_n w\|
        \le C \sigma_2^{n} \|w\|$ and
        $C\sigma_2^{-n}\|w\|\le\|B_{-n} w\|\le
        C\sigma_1^{-n}\|w\|$.
      \end{itemize}
    \item for $*=A,B$, if $u=v+w$ and either
        \begin{align*}
          v\in E^*&, v\in F^*\oplus G^* \quad\text{or}\quad
          v\in F^*, w\in G^* \quad\text{or}
          \\
          v\in E^*\oplus F^*&, w\in G^* \quad\text{or}\quad
          v\in E^*, w\in F^*,
      \end{align*}
      then $\|v\|\le d\|u\|$.
    \end{itemize}
    Then there exists $a>\lambda_2+1/\lambda_2+\sigma_1$ and
    $\delta_0=\delta_0(a,C,\mu_1/\lambda_2,\sigma_1/\mu_2)
    \in (0,1)$ such that if, for some $n>0$, we have
  \begin{align}\label{eq:boundnorm}
    \|A_n\|\le a^{n} \quad\text{and}\quad  \|A_{-n}\|\le a^{n}
  \end{align}
  and for some $0<\delta<\delta_0$, we also have
  \begin{align}\label{eq:boundist}
    \|A_n-B_n\|\le \delta a^{n} \quad\text{and}\quad
    \|A_{-n}-B_{-n}\|\le \delta a^{n}
  \end{align}
  then the following relations are true:
  \begin{align*}
    \dist(E^A,E^B)&\le (2+d)C^2\frac{\mu_1}{\lambda_2}
    \delta^\alpha,\qquad
\dist(F^A,F^B)\le\frac92 (2+3d)^{1+\eta}C^{2(1+\eta)}
    \frac{\sigma_1\mu_1^{\eta}}{\mu_2\lambda_2^{\eta}}
    \delta^{\beta}\qand
\\
\dist(G^A,G^B) &\le
(2+d)C^2\frac{\sigma_1}{\mu_2}\delta^\gamma, 
  \end{align*}
  where $\alpha:=\log(\mu_1/\lambda_2)/\log(a/\lambda_2)$,
  $\eta=\log(\sigma_1/\mu_2)/\log(a/\mu_2)$,
  $\gamma:=\log(\sigma_1/\mu_2)/\log(a\sigma_1)$ and also
  $\beta=[\log(\mu_1/\lambda_2)\log(\sigma_1/\mu_2)]
  /[\log(a\mu_1)\log(a/\mu_2)]$.
\end{lemma}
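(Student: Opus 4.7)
The plan is to reduce the three-bundle situation to three applications of Lemma~\ref{le:seqmatrixesHolder} (the two-bundle case), by regrouping the splitting $\RR^N = E^* \oplus F^* \oplus G^*$ as a two-bundle splitting in three different ways, one tailored to each of $E^*$, $F^*$, $G^*$.

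For $\dist(E^A, E^B)$ I would apply Lemma~\ref{le:seqmatrixesHolder} to the two-bundle decomposition $E^* \oplus (F^* \oplus G^*)$ with $\lambda := \lambda_2$ and $\mu := \mu_1$. The angle hypothesis \eqref{eq:bddangle} is provided directly by the angle hypotheses of the three-bundle lemma (the case $v \in E^*$, $w \in F^* \oplus G^*$), and the upper growth bound on $E^*$ is immediate. The missing ingredient is the lower growth bound on $F^* \oplus G^*$: for $v = v_F + v_G$ with $v_F \in F^*$, $v_G \in G^*$, one uses the angle hypothesis to bound $\|v_F\|, \|v_G\| \le d\|v\|$ and combines the individual lower growth rates $\mu_1$ on $F^*$ and $\sigma_1 > \mu_1$ on $G^*$ to conclude $\|A_n v\| \ge C''\mu_1^n \|v\|$ for a constant $C''$ depending on $C$ and $d$. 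Lemma~\ref{le:seqmatrixesHolder} then yields the claimed bound with exponent $\alpha = \log(\mu_1/\lambda_2)/\log(a/\lambda_2)$. By the same argument applied to the inverse sequences $(A_{-n})$, $(B_{-n})$, using the decomposition $(E^* \oplus F^*) \oplus G^*$ and the renamed parameters $\tilde\lambda := 1/\sigma_1$, $\tilde\mu := 1/\mu_2$, I would obtain $\dist(G^A, G^B) \le (2+d)C^2 (\sigma_1/\mu_2) \delta^\gamma$ with $\gamma = \log(\sigma_1/\mu_2)/\log(a\sigma_1)$.

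For $\dist(F^A, F^B)$ the main obstacle is that $F^*$ is neither slow nor fast with respect to any single two-bundle partition. The form $\beta = \log(\mu_1/\lambda_2)\log(\sigma_1/\mu_2)/[\log(a\mu_1)\log(a/\mu_2)]$, being a product of two logarithmic ratios, strongly suggests a nested application of Lemma~\ref{le:seqmatrixesHolder}. My plan is as follows. The analysis for $E$ in fact also shows that $F^A \oplus G^A$ lies at distance at most $C_1 \delta^\alpha$ from $F^B \oplus G^B$ (these being the complements of the already-close subspaces $E^A$, $E^B$). Identifying $F^B \oplus G^B$ with $F^A \oplus G^A$ via the near-isometric projection along $E^A$, I would then apply Lemma~\ref{le:seqmatrixesHolder} a second time, restricted to $F^A \oplus G^A$, with $\lambda := \mu_2$ (upper bound on $F^*$) and $\mu := \sigma_1$ (lower bound on $G^*$), in order to isolate $F^*$. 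The new perturbation parameter $\delta'$ in this second application combines the original $\delta a^n$ with a correction of order $\delta^\alpha$ times the operator norm of $A_n|_{F^A\oplus G^A}$, both of which must be re-expressed in the form $\delta'\cdot a^n$ to apply the lemma.

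The main technical difficulty is the last step: verifying that the identification through the near-isometric projection preserves the angle and growth hypotheses of Lemma~\ref{le:seqmatrixesHolder} with constants that do not blow up, and tracking the new perturbation $\delta'$ carefully enough so that the resulting exponent takes the specific form $\beta$ rather than the naive product $\alpha \eta$. The appearance of $\log(a\mu_1)$ in the denominator of $\beta$ (rather than, say, $\log(a/\mu_2)$ or $\log(a/\lambda_2)$) indicates that the effective perturbation in the second application carries an extra factor reflecting the growth rate on $F^A \oplus G^A$; getting the exponent right will require absorbing this factor into the bookkeeping exactly, which is the delicate point of the whole argument.
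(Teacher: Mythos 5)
Your overall architecture matches the paper's: obtain $\dist(E^A,E^B)$ and $\dist(G^A,G^B)$ from a single application of Lemma~\ref{le:seqmatrixesHolder} each (to the forward, resp.\ backward, sequences with a two-bundle regrouping), then handle $F$ by identifying $V^A:=F^A\oplus G^A$ with $V^B:=F^B\oplus G^B$ via a near-isometry $\phi=I+L$ and re-applying Lemma~\ref{le:seqmatrixesHolder} to the conjugated sequence $\hat B_n=\phi^{-1}B_n\phi$, finishing with the triangle inequality $\dist(F^A,F^B)\le\dist(F^A,\hat F^B)+\dist(\hat F^B,F^B)$. You also correctly predict that $\beta$ arises as a product of two exponents and that $\eta=\log(\sigma_1/\mu_2)/\log(a/\mu_2)$ is the factor coming from the second, conjugated application.

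The gap is in how you propose to bound $\dist(V^A,V^B)$. You assert that ``the analysis for $E$ in fact also shows'' $\dist(V^A,V^B)\lesssim\delta^\alpha$, treating the $V$'s as complements of the already-close $E$'s. But Lemma~\ref{le:seqmatrixesHolder} controls only the distance between the \emph{slow} subspaces of the two sequences; it says nothing about the distance between the complementary fast subspaces, and a small $\dist(E^A,E^B)$ does not by itself force the complements to be close. The paper obtains the needed bound by a separate, third application of Lemma~\ref{le:seqmatrixesHolder} to the inverse sequences $(A_{-n}),(B_{-n})$ with the same regrouping $E^*\oplus V^*$: under $A_{-n}$ the subspace $V^*$ is the slow one, with rates $\lambda_{\rm eff}=\mu_1^{-1}$ and $\mu_{\rm eff}=\lambda_2^{-1}$, and the resulting H\"older exponent is $\omega:=\log(\mu_1/\lambda_2)/\log(a\mu_1)$, not $\alpha=\log(\mu_1/\lambda_2)/\log(a/\lambda_2)$. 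This $\omega$ is precisely where the $\log(a\mu_1)$ you are puzzling over enters: it is in place already at the stage of estimating $\dist(V^A,V^B)$, before the conjugated second application, and $\beta=\omega\,\eta$. Your diagnosis that $\log(a\mu_1)$ should come from an extra factor in the effective perturbation of the second application is therefore off target. The rest of your bookkeeping (controlling $\|\phi^{\pm1}\|$, the altered angle constant replacing $d$, and the new perturbation $\|A_n-\hat B_n\|\lesssim a^n\|L\|$) is the right shape and matches what the paper carries out.

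One further caveat, shared by the paper but worth flagging because you make the claim explicitly: the lower growth bound $\|A_nv\|\gtrsim\mu_1^n\|v\|$ for $v\in F^A\oplus G^A$ does not follow merely from the angle condition on $E^*,F^*,G^*$ together with the componentwise lower bounds, since $A_nv_F$ and $A_nv_G$ could a priori nearly cancel; one needs the images $A_nF^A,A_nG^A$ to stay uniformly transverse along the orbit. In the intended application this is part of the definition of a regular block, but as a freestanding lemma the point deserves an explicit hypothesis or remark.
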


  We remark that $\alpha,\beta$ and $\gamma$ are positive and
  smaller than $1$ by the choice of $a$ and the order
  relations between the rates of expansion/contraction.


  \section{H\"older continuity of the Oseledets splitting on
    regular blocks}
\label{sec:holder-contin-oseled}

Here we prove the main theorem by combining the lemmas from
Section~\ref{sec:prelim-results-defin} and applying them to
regular sets of a (not necessarily invertible) co-cycle $A$
over $f$.  After that we prove the invertible case of the
main theorem. Then we prove the lemmas in
Section~\ref{sec:lemmata}.

We assume the first two conditions on Lemma~\ref{le:Mane},
i.e. $f:M\circlearrowleft$ is Lipschitz and $A:M\to
GL(\RR,d)$ is $(c_0,\nu)$-H\"older; and check the other
condition, that is, that $\|A^n(x)\|$ grows at most
exponentially for $n\ge1$. We continue under the assumption
that $\mu$ is $f$-ergodic.

\subsection{The case of non-invertible base}
\label{sec:case-non-invert}

For $\ell\ge\ell_0$ and $0<\epsilon<\epsilon_0$ we set
$\Lambda_\epsilon=\Lambda^\ell_\epsilon$ such that
$\mu(\Lambda_\epsilon)>1-\epsilon$, from
Section~\ref{sec:regular-blocks-non}, we take $x,y\in
\Lambda_\epsilon$ and for each $1<i<k$, $n\in\ZZ^+$ we
define
\begin{align*}
  A_n&=A^n(x), \quad B_n=A^n(y) \qand L=e^{2(\chi_k-\chi_1)}
  \qand C=\ell;
  \\
  \lambda
  &=e^{\chi_i+\epsilon},
  \mu=e^{\chi_{i+1}-\epsilon},
  \sigma=e^{\chi_k+\epsilon} \qand d=1;
  \\
  E&=F^{i}_x, \quad
  E^\prime= \big(F^i_x\big)^\perp\qand
  F=F^{i}_y, \quad
  F^\prime= \big(F^{i}_y \big)^\perp.
\end{align*}
From the definition of regular block in the possibly
non-invertible case, in
Section~\ref{sec:regular-blocks-non}, we have guaranteed the
upper bound on the exponential growth of $\|A^n(x)\|$ for
$x\in\Lambda_\epsilon^\ell$ and $n\ge0$.

The value $d=1$ comes from the choice of $E^\prime,F^\prime$
as orthogonal complements with respect to the inner product
that defines $\|\cdot\|$.

From Lemma~\ref{le:Mane} we can find a constant $a>c_1$ big
enough such that the condition~\eqref{eq:HipHold} in
Lemma~\ref{le:seqmatrixesHolder} is true for the choices of
sequences $A_n,B_n$ and for some $n\in\ZZ^+$ with
$0<\delta=\dist(x,y)^\nu<\min\{\chi_1/c_1\}$. Here $\chi_1$
is the smallest Lyapunov exponent and $c_1$ is given by
Lemma~\ref{le:Mane}.

Indeed, for $\delta$ and $\lambda$ chosen as above for any
given $i=2,\dots,k-1$, since $\chi_1\le\lambda$ we can
certainly find $n>0$ such that
$(\lambda/a)^{n+1}<\delta$ and then proceed
using the statements of Lemmas~\ref{le:seqmatrixesHolder}
and \ref{le:Mane}.  The other assumptions on
Lemma~\ref{le:seqmatrixesHolder} are true by
the definition of hyperbolic block in
Section~\ref{sec:regular-blocks-non} together with the
choices of subspaces above.

We can then apply Lemma~\ref{le:seqmatrixesHolder} to
conclude that for $i=1,\dots, k-1$
\begin{align*}
  \dist(F^i_x,F^i_y)
  \le
  3\ell^2 e^{\eta_i}
  \dist(x,y)^{\nu\omega_i}
  \quad\text{where}\quad
  \eta_i=\chi_{i+1}-\chi_i-2\epsilon
  \qand
  \omega_i=\frac{\eta_i}{\log a  -\chi_i-\epsilon}.
\end{align*}
This completes the proof of
Theorem~\ref{mthm:HolderBundles-ae} in the non-invertible
case once we set $C_\epsilon=\ell^2\max_{i=1,\dots,k-1}
\{e^{\eta_i}\}$.

\subsection{The case of invertible base}
  \label{sec:case-invert-base}

  Again for $\ell\ge\ell_0$ and $0<\epsilon<\epsilon_0$ we
  set $\Lambda_\epsilon=\Lambda^\ell_\epsilon$ such that
  $\mu(\Lambda_\epsilon)>1-\epsilon$, from
  Section~\ref{sec:regular-sets}, we take $x,y\in
  \Lambda_\epsilon$ and for each $1<i<k$, $n\in\ZZ$ we
  define
\begin{align*}
  A_n&=A^n(x), \quad B_n=A^n(y) \qand L=e^{2(\chi_k-\chi_1)}
  \qand C=d=\ell;
  \\
  \lambda_1&=e^{\chi_1-\epsilon},
  \lambda_2=e^{\chi_{i-1}+\epsilon},
  \mu_1=e^{\chi_i-\epsilon},
  \mu_2=e^{\chi_i+\epsilon},
  \sigma_1=e^{\chi_{i+1}-\epsilon},
  \sigma_2=e^{\chi_k+\epsilon};
  \\
  E^A&=F^{i-1}_x=\oplus_{j=1}^{i-1}E^j_x, \quad
  F^A=E^i_x \qand
  G^A=\oplus_{j=i+1}^k E^i_x
  \\
  E^B&=F^{i-1}_y=\oplus_{j=1}^{i-1}E^j_y, \quad
  F^B=E^i_y \qand
  G^B=\oplus_{j=i+1}^k E^i_y.
\end{align*}
From the definition of regular block, in
Section~\ref{sec:regular-sets}, we know that the assumption
on exponential growth of $\|A^n(x)\|$ for
$x\in\Lambda_\epsilon^\ell$ and $n\in\ZZ$ is guaranteed; see
the observation in~\eqref{eq:expboundnorm}.  So from
Lemma~\ref{le:Mane} we can find a constant $a>0$ big enough
such that condition~\eqref{eq:boundnorm} and item (1) in the
statement of Lemma~\ref{le:3bundlesmatrixHolder} are true
for the choices of sequences $A_n,B_n$ and subspaces above
with $0<\delta=\dist(x,y)^\nu<\min\{\chi_1/c_1,\delta_0\}$
and some $n\in\ZZ^+$. By the definition of the
regular block $\Lambda_\epsilon^\ell$, and the choices of
constants, all the conditions of item (2) and (3) are
satisfied also.  The statement of
Lemma~\ref{le:3bundlesmatrixHolder} ensures then that
\begin{align*}
  \dist(E^i_x,E^i_y)\le C_\ell
  e^{\chi_{i+1}-\chi_{i}-2\epsilon+\eta(\chi_{i}-\chi_{i-1}-2\epsilon)} d(x,y)^{\nu\beta} \quad
  \text{where}\quad
  \eta=\frac{\chi_{i+1}-\chi_{i}-2\epsilon}{\log a-\chi_i-\epsilon},
\end{align*}
$\beta=\eta\frac{\chi_i-\chi_{i-1}-2\epsilon}{\chi_i-\epsilon+\log
  a}$ and $C_\ell\to\infty$ monotonically as
$\ell\to+\infty$.

For the subbundle $E^1$ (with smallest Lyapunov exponent) we
take $i=2$ as above and consider the estimate for
$\dist(E^A,E^B)$ from Lemma~\ref{le:3bundlesmatrixHolder} to
obtain
\begin{align*}
  \dist(E^1_x,E^1_y)\le C_\ell
  e^{\chi_{2}-\chi_{1}-2\epsilon} d(x,y)^{\nu\alpha} \quad
  \text{where}\quad
  \alpha=\frac{\chi_2-\chi_1-2\epsilon}{\log a - \chi_1
    -\epsilon}.
\end{align*}
Finally for the subbundle $E^k$ (with largest Lyapunov
exponent) we take $i=k-1$ as above and consider the estimate
for $\dist(G^A,G^B)$ from
Lemma~\ref{le:3bundlesmatrixHolder} to obtain
\begin{align*}
  \dist(E^k_x,E^k_y)\le C_\ell
  e^{\chi_{k}-\chi_{k-1}-2\epsilon} d(x,y)^{\nu\gamma} \quad
  \text{where}\quad
  \gamma=\frac{\chi_k-\chi_{k-1}-2\epsilon}{\log a + \chi_k
    -\epsilon}.
\end{align*}

This shows that the dependence of the Oseledets directions
on the base point at every regular block is H\"older, with
H\"older exponent essentially dependent on the minimum gap
$\epsilon_0$ of the Lyapunov spectrum, and H\"older constant
essentially dependent on the choice of the regular
block. The proof of Theorem~\ref{mthm:HolderBundles-ae} is
complete once we set $C_\epsilon$ to equal the maximum of
the factors multiplying $\dist(x,y)$ in the above
expressions.

\subsection{The case of vector bundle automorphisms}
\label{sec:case-vector-bundle}

Now $A:\cV\to\cV$ is an automorphism of the $d$-dimensional
vector bundle $\cV$ covering the map $f:M\to M$ on the
$m$- dimensional manifold $M$. We assume that $f$ admits
an ergodic $f$-invariant probability such that $A$ is
admissible with respect to $\mu$.

We again start by choosing a regular block
$\Lambda_\epsilon=\Lambda_\epsilon^\ell$ for some
$0<\epsilon<\epsilon_0$ and $\ell\ge\ell_0$, such that
$\mu(\Lambda_\epsilon)>1-\epsilon$.
We recall that for $\mu$-a.e. $x$ we have defined
in~\eqref{eq:innerball} a positive function $c(x)$ relating
the distance between $x$ and $\cP_n(x)$. For $\mu$-a.e.
$x$ let $N_1=N(x)\in\ZZ^+$ be the first
integer so that $c(x)>1/2^N$. Then we have
\begin{align}\label{eq:boundaway}
  n>N_1\implies y\in\cP_n(x)\setminus\cP_{n+1}(x)
  \quad\text{satisfies}\quad
  \dist_x(y)\ge c(x)\xi^{n+1} > \left(\frac{\xi}2\right)^{n+1}.
\end{align}
Let $N_2=N_2(x)\ge N_1$ be such that $\diam\cP_n(x)<\chi_1/c_1$ for
all $n>N_2$.

\subsubsection{The case of non-invertible base}
\label{sec:case-non-invert-1}

With the previous choices, for each $1<i<k$ and $n\in\ZZ^+$
we define $x_n=f^n(x)$ and $y_n=f^n(y)$ for
$y\in\Lambda_\epsilon\cap\cP_{N_2}(x)$
\begin{align}
  A_n&=\psi_{j_n,x_n}A^n(x)\psi_{i,x}^{-1}, \text{ with  }
  x_n\in U_{j_n}, x\in U_i; \label{eq:An-global}
  \\
  B_n&=\psi_{r_n,y_n}A^n(y)\psi_{i,y}^{-1}, \text{ with  }
  y_n\in U_{r_n}, x\in U_i;\label{eq:Bn-global}
\end{align}
and then choose $L, C, \lambda, \mu, \sigma, d, E, E^\prime,
F, F^\prime$ and also $\delta=d_x(y)^\nu$ as in
Section~\ref{sec:case-non-invert}. 

There exists $n\ge N_2$ such that
$y\in\cP_{n}(x)\setminus\cP_{n+1}(x)$, thus if we choose
$a>c_1$ big enough, since $\lambda\ge\chi_1$, we have
$(\lambda/a)^{n+1}< (\xi/2)^{(n+1)/\nu}<d_x(y)$. For this it
is enough to take $a>c_1+\chi_k\cdot(\xi/2)^{-\nu}$.  This,
together with the definition of hyperbolic block, ensures
that the conditions of Lemma~\ref{le:seqmatrixesHolder} are
verified for $y\in\Lambda_\epsilon\cap\cP_{N_2}(x)$.
Finally we note that, from~\eqref{eq:innerball}, every point
$y\in \Lambda_\epsilon\cap B(x, c(x)\xi^{N_2})$ belongs to
$\Lambda_\epsilon\cap\cP_{N_2}(x)$, so that we arrive at the same
conclusion as in Section~\ref{sec:case-non-invert}.

This completes the proof of
Theorem~\ref{mthm:Holder-bundles-vectorbundle} in the
non-invertible case, if we set $\delta(x):=c(x)\xi^{N_2}$.

\subsubsection{The case of invertible base}
\label{sec:case-invert-base-1}

We again use the previous choices of $N_2=N_2(x)$ and, for
each $1<i<k$, we set $L, C, \lambda_1, \lambda_2, \mu_1, \mu_2,
\sigma_1, \sigma_2$ as in
Section~\ref{sec:case-invert-base}. This provides a value
$\delta_0=\delta_0(i)$ from
Lemma~\ref{le:3bundlesmatrixHolder}. We now choose
$\kappa_i=\kappa_i(x)\ge N_2$ so that
$\diam\cP_{n}(x)<\delta_0(i)$ for all $n>\kappa_i$.

For each $n\in\ZZ$, we define $x_n=f^n(x)$ and $y_n=f^n(y)$
for $y\in\Lambda_\epsilon\cap\cP_{\kappa_i}(x)$, and we make
the same definitions as in~\eqref{eq:An-global}
and~\eqref{eq:Bn-global}.

We then take $E^A, F^A, G^A, E^B, F^B, G^B$ as in
Section~\ref{sec:case-invert-base}.  From the definition of
hyperbolic block in the invertible case we can verify the
conditions of Lemma~\ref{le:3bundlesmatrixHolder} with
$\delta=d_x(y)^\nu$ for some positive integer $n$. For
that we have to choose $a$ sufficiently large as explained
above.

So we obtain the same conclusions as in
Section~\ref{sec:case-invert-base} for each $1<i<k$ and also
for $i=1$ and $i=k$. We then set
$N=N(x)=\max\{\kappa_1,\dots,\kappa_k\}$ and obtain the
conclusion of Theorem~\ref{mthm:Holder-bundles-vectorbundle}
for $y\in\Lambda_\epsilon\cap\cP_n(x)$ with $n\ge N$.

To conclude, since $\lambda_\epsilon\cap B(x,c(x)\xi^{N})$
is contained in $\lambda_\epsilon\cap\cP_{N}(x)$, setting
$\delta(x)=c(x)\xi^{N}$ completes the proof of
Theorem~\ref{mthm:Holder-bundles-vectorbundle}.


\section{Proofs of the Lemmata}
\label{sec:lemmata}

Now we present a proof of the technical lemmata from
Section~\ref{sec:prelim-results-defin}.

\begin{proof}[Proof of Lemma~\ref{le:seqmatrixesHolder}] 
The proof follows the one given in \cite[Lemma
5.3.3]{BarPes2007} closely.

  Let us fix $n>0$ as in the statement.  We define the
  following cones $Q=\{u\in\RR^N:\|A_n u\|\le
  2C\lambda^n\|u\|\}$ and $R=\{u\in\RR^N:\|B_n u\|\le
  2C\lambda^n\|u\|\}$. We decompose $u\in\RR^N$ into $v+w$
  with $v\in E$ and $w\in E^\prime$. If $u\in Q$, then since
  $\|v\|\le d\|u\|$ by assumption we see that
  \begin{align*}
   2C\lambda^n\|u\|\ge \|A_n u\|= \|A_n(v+w)\|\ge
    C^{-1}\mu^n\|w\|-C\lambda^n\|v\|
    \ge C^{-1}\mu^n\|w\|-C\lambda^n d \|u\|
  \end{align*}
  implies
  $\|w\|\le(2+d)C^2\left(\frac{\lambda}{\mu}\right)^n\|u\|$. Therefore
  $\dist(u,E)\le
  (2+d)C^2\left(\frac{\lambda}{\mu}\right)^n\|u\|$ for all $u\in
  Q$.
  
  Now for $a>\lambda$ and $\delta\in(0,1]$ such that
  \eqref{eq:HipHold} is true we set
  $\gamma:=\lambda/a\in(0,1)$ and observe that if $u\in F$
  then
  \begin{align*}
    \|A_n u\| 
    &\le 
    \|B_n u\| + \|A_n - B_n\|\|u\|
    \le
    C\lambda^n\|u\| + \delta a^n \|u\|
    \\
    &\le
    (C\lambda^n + (\gamma a)^n) \|u\|
    \le
    2C\lambda^n\|u\|
  \end{align*}
  and so $u\in Q$, that is, $F\subset Q$. Symmetrically we
  get $E\subset R$. Hence using again~\eqref{eq:HipHold} we
  conclude that $\dist(E,F)\le (2+d)C^2(\lambda/\mu)^n \le
  (2+d)C^2\frac{\lambda}{\mu}\delta^{\log(\lambda/\mu)/\log(\lambda/a)}$.

\end{proof}

\begin{proof}[Proof of Lemma~\ref{le:Mane}]
  We follow~\cite[proof of Lemma 13.5]{Man87} (see
  also~\cite[Lemma 5.3.4]{BarPes2007}) and argue by
  induction on $n\in\NN$ and write $x_n=f^n(x_0)$ and
  $y_n=f^n(y_0)$ for $n\ge0$ and $x_0,y_0\in\Lambda$. For
  $n=1$ we have $\|A(x_0)-A(y_0)\|\le c_0
  \dist(x_0,y_0)^\nu$ by the H\"older assumption, for some
  $c_0>0$.

  Let us assume that we can find $c>c_0$ as in the statement
  of the lemma for $k=1,\dots,n$ and let us see what we need
  to extend the property for $k=n+1$. 
Since we have a cocycle with values in $GL(d,\RR)$, we can
write
  \begin{align*}
    A^{n+1}(x_0)-A^{n+1}(y_0)
    =
    A(x_n)\cdot(A^n(x_0)-A^n(y_0))+(A(x_n)-A(y_n))\cdot A^n(y_0)
  \end{align*}
  adding and subtracting $A(x_n)\cdot A^n(y_0)$.  We recall
  that $\log L>0$ is assumed to be an upper bound for
  $\{n^{-1}\log \|A^n(z)\|: z\in M,n\in\NN\}$ and write,
  using the induction assumption
  \begin{align*}
    \|A^{n+1}(x_0)-A^{n+1}(y_0)\|
    &\le
    \|A(x_n)\|\cdot\|A^n(x_0)-A^n(y_0)\|
    +
    \|A^n(y_0)\|\cdot\|A(x_n)-A(y_n)\|
    \\
    &\le
    e^{\epsilon n} C^n \dist(x_0,y_0)^\nu + L^n c_0 \dist(x_n,y_n)^\nu.
  \end{align*}
Now since $L$ is also a Lipschitz constant of $f$ we get
$\dist(x_n,y_n)\le L^n\dist(x_0,y_0)$ and so we can bound
the previous expression
\begin{align*}
  \le [( e^\epsilon C)^n + L^n c_0 L^{n\nu} ]  \dist (x_0,y_0)^\nu
  = [  (e^\epsilon C)^n + c_0 L^{n(1+\nu)} ]  \dist (x_0,y_0)^\nu.
\end{align*}
To complete the inductive step all we need is that
\begin{align*}
  C^{n+1} \ge  (C e^\epsilon)^n + c_0 L^{n(1+\nu)}
  \quad\text{that is}\quad
  C
  \ge 
  \left(\frac{e^\epsilon}{C}\right)^n 
  + c_0 \left( \frac{L^{1+\nu}}{C}\right)^n
  \quad\text{for all}\quad n\ge0
\end{align*}
which can easily be achieved by taking a sufficiently large
$C\ge c_1>c_0>0$ as in the statement of the lemma.
\end{proof}

\begin{proof}[Proof of Lemma~\ref{le:3bundlesmatrixHolder}]
  We use Lemma~\ref{le:seqmatrixesHolder}
  applied to the splitting $E^*\oplus (F^*\oplus G^*)$ to deduce
  that, on the one hand
  \begin{align}\label{eq:alfa}
    \dist(E^A,E^B) \le (2+d)C^2\frac{\mu_1}{\lambda_2}
    \delta^{\log(\mu_1/\lambda_2)/\log(a/\lambda_2)}
    = (2+d)C^2\frac{\mu_1}{\lambda_2}
    \delta^\alpha
  \end{align}
  and, on the other hand
  \begin{align}\label{eq:beta}
    \dist(V^A,V^B)\le
    (2+d)C^2\frac{\lambda_2^{-1}}{\mu_1^{-1}}
    \delta^{\log(\lambda_2^{-1}/\mu_1^{-1})/\log(a/\mu_1^{-1})}
    =
    \widetilde C\frac{\mu_1}{\lambda_2} 
    \delta^{\log(\mu_1/\lambda_2)/\log(a\mu_1)}
    =\widetilde C\frac{\mu_1}{\lambda_2} 
    \delta^\omega,
  \end{align}
  where $\widetilde C=(2+d)C^2$, $V^A:=F^A\oplus G^A$ and
  $V^B:=F^B\oplus G^B$ and
  $\omega=\log(\mu_1/\lambda_2)/\log(a\mu_1)$ for
  simplicity. The bound for the distance between $V^A,
  V^B$ is deduced using Lemma~\ref{le:seqmatrixesHolder}
  applied to the sequences $(A_{-n})_{n\ge0}$ and
  $(B_{-n})_{n\ge0}$.  The hypothesis in the statement of
  Lemma~\ref{le:3bundlesmatrixHolder} exactly provide the
  necessary conditions to apply
  Lemma~\ref{le:seqmatrixesHolder} to these sequences of
  matrices.

  We can also apply Lemma~\ref{le:seqmatrixesHolder} for the
  sequences $(A_{-n})_{n\ge0}$ and $(B_{-n})_{n\ge0}$ but
  now with the splittings $U^A\oplus G^A$ and $U^B\oplus
  G^B$ with $U^A:=E^A\oplus F^A$ and $U^B:=E^B\oplus
  F^B$. We obtain
  \begin{align}\label{eq:gama}
    \dist(G^A,G^B) \le \widetilde C \frac{\mu_2^{-1}}{\sigma_1^{-1}}
    \delta^{\log(\mu_2^{-1}/\sigma_1^{-1})/\log(a/\sigma_1^{-1})}
    =
    \widetilde C\frac{\sigma_1}{\mu_2}\delta^{\log(\sigma_1/\mu_2)/\log(a\sigma_1)}
    =\widetilde C\frac{\sigma_1}{\mu_2}\delta^\gamma.
  \end{align}

  So we can find a linear map $L:V^A\to (V^A)^\perp$
  such that its graph equals $V^B$, that is
  \begin{align}\label{eq:hatF1AB}
    V^B=\{ u+Lu : u\in V^A \}
    \qand
    \frac{\|L\|}{\sqrt{1+\|L\|^2}} \le \dist(V^A,V^B) \le \|L\|
  \end{align}
  by the estimate obtained in \eqref{eq:normdist}. 

  This allows us to identify $V^A$ with $V^B$ via the
  isomorphism 
  \begin{align*}
    \phi:=I+L : V^A \to V^B, u \mapsto u+Lu
  \end{align*}
  and then pass the splitting $F^B\oplus G^B$ of $V^B$ to
  a corresponding splitting $\hat F^B\oplus \hat G^B$ of
  $V^A$ with $\hat F^B=\phi^{-1} F^B, \hat G^B=
  \phi^{-1}G^B$. 

  Now we can compare the distance between the pair
  $F^A,\hat F^B$ using the action of the sequences of matrices
  $A_n$ and $\hat B_n=\phi^{-1} B_n \phi$ and
  Lemma~\ref{le:seqmatrixesHolder} again. To finish we need
  to estimate
  \begin{align}\label{eq:triangle}
    \dist(F^A,F^B)\le \dist(F^A,\hat F^B)+\dist(\hat F^B,F^B).
  \end{align}
  We first consider the estimation of $\dist(F^A,\hat F^B)$.
  We already have the right conditions for the action of
  $A_n$ over the splitting $F^A\oplus G^A$. As for $\hat
  B_n$ it is easy to see that
  \begin{align*}
    \frac1{\|\phi\|} \|B_n\mid F^B\|
    \le
    \|\hat B_n\mid \hat F^B\|=\|\phi^{-1}B_n\mid F^B\|
    \le
    \|\phi^{-1}\| \cdot \|B_n\mid F^B\|
  \end{align*}
  and since the image of $L$ is orthogonal to the domain, we
  have $\|\phi(v)\|\ge\|v\|$ for all $v$, thus
  \begin{align*}
   \|\phi^{-1}\|\le1 
    \qand
    \|\phi\|=\|I+L\|\le 1+\|L\|
  \end{align*}
  we obtain a control of the action of $\hat B_n$ on $\hat
  F^B$ in a similar way to what we had for $B_n$ on $F^B$,
  were the constant $C$ is replaced by $C(1+\|L\|)$ on the
  upper bound and $C(1-\|L\|)$ on the lower bound, as long
  as we take $\dist(V^A,V^B)$ close enough to zero by
  letting $\delta_0$ be small enough since the beginning. An
  analogous estimate holds for $\hat B_n\mid \hat G_B$.
  
  We now estimate the angle between $\hat F^B$ and $\hat
  G^B$, that is, the new value which plays the role of $d$
  in Lemma~\ref{le:seqmatrixesHolder}. 

  We assume that for all $v\in F^B, w\in G^B$ we have
  $\|v\|\le d\|v+w\|$. If we now take $v\in\hat F^B,
  w\in\hat G^B$, then $\phi(v)\in F^B$ and $\phi(w)\in G^B$
  and so $\|\phi(v)\|\le d\|\phi(v+w)\|$. This implies
  $\|\phi^{-1}\|^{-1}\|v\|\le d \|\phi\|\|v+w\|$ so we
  obtain
  \begin{align*}
    \|v\|
    \le 
    d\|\phi\|\|\phi^{-1}\| \|v+w\| 
    \le
    \tau(d, L)\|v+w\|
    \quad\text{where}\quad
    \tau(d,L):=d \frac{1+\|L\|}{1-\|L\|}.
  \end{align*}
  The relation~\eqref{eq:hatF1AB} between $\|L\|$ and
  $\dist(V^A,V^B)$ enables us to deduce
  \begin{align*}
    \frac{1+\dist(V^A,V^B)}{1-\dist(V^A,V^B)} 
    \le
    \frac{1+\|L\|}{1-\|L\|} 
    \le
    \frac{1+\dist(V^A,V^B)\big(1-\dist(V^A,V^B)^2\big)^{-1/2}}
    {1-\dist(V^A,V^B)\big(1-\dist(V^A,V^B)^2\big)^{-1/2}}
  \end{align*}
  and so $\tau(d,L)$ is a function of
  $\dist(V^A,V^B)$. Moreover $\tau(d,L)$ goes to $d$ as
  $\dist(V^A,V^B)$ goes to zero. In particular, if
  $0<\|L\|<1/2$ we get $\tau(d,L)\le 3d$.

  We finally check the condition on the distance between
  $A_n$ and $\hat B_n$
  \begin{align*}
    \|A_n-\hat B_n\|
    &\le
    \|A_n-\phi^{-1}A_n\|+\|\phi^{-1}A_n - \phi^{-1}A_n\phi\|
    +
    \|\phi^{-1}A_n\phi - \hat B_n\|
    \\
    &\le
    \|I-\phi\| \| A_n\| + \|\phi^{-1}\|\|A_n\|\|I-\phi\|
    +\|\phi^{-1}\|\|A_n-B_n\|\|\phi\|
    \\
    &\le
    \frac{a^{|n|}\|L\|}{1-\|L\|}
    +
    \frac{a^{|n|}\|L\|}{(1-\|L\|)^2}
    +
    \delta a^{|n|} \frac{\|L\|(1+\|L\|)}{1-\|L\|}
    =a^n\|L\|\omega(L)
  \end{align*}
  since $ \|I-\phi^{-1}\|=\|I-(I+L)^{-1}\| \le
  \frac{\|L\|}{1-\|L\|}$, where $\|L\|$ is essentially
  $\dist(V^A,V^B)$ according to~(\ref{eq:hatF1AB}), and
  $\omega(L)$ can be made as close to $1$ as we need by
  letting $\|L\|$ (or $\dist(V^A,V^B)$) be close enough
  to zero.  Hence by taking $\delta$ small enough since the
  beginning we can ensure that $\|L\|\in(0,1/2)$ and
  $\|L\|\omega(L)\in(0,1/2)$.

  We can now apply Lemma~\ref{le:seqmatrixesHolder} to the
  pairs $F^A,G^A$ and $\hat F^B, \hat G^B$ to conclude
  \begin{align*}
    \dist(F^A,\hat F^B)\le (2+\tau(d,L))(C(1+\|L\|))^2
    \frac{\sigma_1}{\mu_2}
    (\|L\|\omega(L))^{\log(\sigma_1/\mu_2)/\log(a/\mu_2)}.
  \end{align*}
  For $0<\delta<\delta_0<1$ with $\delta_0$ small enough,
  depending on $C,\mu_1/\lambda_2$ and $a$, we obtain that
  $\|L\|\omega(L)\le 2\dist(V^A,V^B)$. Thus, setting
  $\eta=\log(\sigma_1/\mu_2)/\log(a/\mu_2)$ for
  simplicity
  \begin{align}\label{eq:betanu}
    \dist(F^A,\hat F^B)\le
    (2+3d)(\frac32 C)^2\cdot 2^\eta\frac{\sigma_1}{\mu_2}
    \dist(V^A,V^B)^{\eta}
    \le
    \frac94 C^2 (2+3d) \tilde C^\eta
    \frac{\sigma_1\mu_1^{\eta}}{\mu_2\lambda_2^{\eta}}
    \delta^{\omega\eta}.
  \end{align}

  To finish we estimate $\dist(\hat F^B,F^B)$.  

  We already have that $\dist(\phi^{-1}F^B,F^B)$ is
  comparable to $\|\phi^{-1}\|$ by \eqref{eq:normdist}.  We
  remark that
  $\phi^{-1}=(I+L)^{-1}=I+\sum_{k>0}(-L)^k=I+\hat L$, where
  $\hat L: V^A\to (V^A)^\perp$ and $\|\hat L\|\le
  \|L\|(1-\|L\|)^{-1}\le \|L\|\omega(L)$.  Hence
  \begin{align}\label{eq:hatdistance}
    \dist(\hat F^B,F^B) \le \|\hat L\| \le \|L\|\omega(L)
    \le 2\dist(V^A,V^B)
  \end{align}
  and we finally complete~\eqref{eq:triangle} using
  \eqref{eq:betanu} and \eqref{eq:beta}
  \begin{align}\label{eq:betagama}
    \dist(F^A,F^B) 
    \le 
    \frac94 C^2 (2+3d) \tilde C^\eta
    \frac{\sigma_1\mu_1^{\eta}}{\mu_2\lambda_2^{\eta}}
    \delta^{\omega\eta}
    +
    \widetilde C\frac{\mu_1}{\lambda_2} 
    \delta^\omega
    \le 
    \frac92 (2+3d)^{1+\eta}C^{2(1+\eta)}
    \frac{\sigma_1\mu_1^{\eta}}{\mu_2\lambda_2^{\eta}}
    \delta^{\omega\eta}
  \end{align}
  for $\delta_0\in (0,1)$ small enough. Putting the
  inequalities \eqref{eq:alfa}, \eqref{eq:gama} and
  \eqref{eq:betagama} together, we arrive to the conclusion
  of the lemma.
\end{proof}

\section{The Kontsevich-Zorich cocycle}
\label{sec:KZ}

In this section, we prove the theorem about H\"older
continuity of the Oseledets subspaces for the
Kontsevich-Zorich cocycle.

We first provide some general lemmas to deal with
topologically non-trivial cocycles.  The first is a
different point of view on Lemma \ref{le:seqmatrixesHolder};
the focus is on norms rather than matrices.  The second
allows one to intersect Oseledets filtrations, thus reducing
the necessary estimates.  Finally, the third allows one to
reduce vector bundle cocycles to matrix ones.

In the last part of the section, we combine these
constructions with the norm constructed by Avila, Gouezel
and Yoccoz \cite{AGY,AG}.  This gives the main results for
the Kontsevich-Zorich cocycle.

\subsection{Some more preliminary lemmas}

We consider Euclidean inner products on $\bR^d$, which we call metrics below.
Given two such, say $h_1$ and $h_2$, define the distance between them as
\begin{align}
\label{eqn:dist_metric}
\dist(h_1,h_2):=\log \sup_{\substack{\norm{v_1}_{h_1}=1 \\ \norm{v_2}_{h_2}=1 }} \left\{ \norm{v_1}_{h_2}, \norm{v_2}_{h_1} \right\}
\end{align}
This gives the inequality
$$
\norm{v}_{h_1}\leq e^{\dist(h_1,h_2)} \norm{v}_{h_2}
$$

Given a metric $h$, define the distance between two spaces $E,F$ to be
\begin{align*}
\dist_h(E,F):=\sup & \left\{ \norm{f^\perp}_h : e=f + f^\perp, \norm{e}_h=1, e\in E, f\in F, f^\perp\in F^\perp\right\} \bigcup \\
& \bigcup \left\{ \norm{e^\perp}_h : f=e + e^\perp, \norm{f}_h=1, f\in F, e\in E, e^\perp\in E^\perp\right\}.
\end{align*}

\begin{lemma}
\label{lemma:metrics_dist_bound}
 Suppose we are given decompositions of $\bR^d$ as
 $$
 \bR^d = E\oplus E^\perp = F\oplus F^\perp
 $$
 where perpendiculars are for a fixed metric $h_0$.  Suppose
 given a sequence of metrics $h_n^E, h_n^F$ satisfying
 \begin{align}
   \label{eqn:metric_assumption}
 \dist(h_n^E, h_n^F)\leq \log (1+\delta C_2 A^n)
\end{align}
 for some constants $C_2, A, \delta$.  Assume further that
 we have constants $0<\lambda<\mu$ and $C$ such that
 $\forall f\in F, \forall f'\in F^\perp$
 \begin{align}
   \norm{f}_{h_n^F}&\leq C \lambda^n \norm{f}_{h_0} \label{eqn:F_estimate}\\
  C^{-1} \mu^n \norm{f'}_{h_0} \leq \norm{f'}_{h_n^F} \label{eqn:F_estimate_down}.
 \end{align}
 Suppose that the above growth estimates hold analogously for $E$ instead of $F$.
 Then we have the distance estimate
 $$
 \dist_{h_0}(E,F)\leq C^2(2+C_2A) \delta^{\frac {\log \frac \mu \lambda} {\log A}}.
 $$
\end{lemma}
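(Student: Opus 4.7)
The plan is to adapt the cone / reverse–triangle inequality strategy of Lemma~\ref{le:seqmatrixesHolder}, but with the operator-norm bound replaced by the metric comparison \eqref{eqn:metric_assumption}. By symmetry of the definition of $\dist_{h_0}(E,F)$, it suffices to consider a unit vector $v\in F$ with $\|v\|_{h_0}=1$ and its $h_0$-orthogonal decomposition $v=e+e^\perp$ with $e\in E$, $e^\perp\in E^\perp$; the goal is to bound $\|e^\perp\|_{h_0}$ in terms of $\delta$, with the case $v\in E$ being identical after swapping $E\leftrightarrow F$.

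The core chain of inequalities proceeds in three short steps. First, combining \eqref{eqn:F_estimate} for $v\in F$ with \eqref{eqn:metric_assumption} gives
\[
\|v\|_{h_n^E}\leq (1+\delta C_2 A^n)\|v\|_{h_n^F}\leq C\lambda^n(1+\delta C_2 A^n).
\]
Second, the triangle inequality in the metric $h_n^E$ yields $\|e^\perp\|_{h_n^E}\leq \|v\|_{h_n^E}+\|e\|_{h_n^E}$, and feeding in the lower bound \eqref{eqn:F_estimate_down} (applied in $E^\perp$) on the left and the analogue of \eqref{eqn:F_estimate} (applied in $E$) on the right, together with $\|e\|_{h_0}\leq 1$, gives
\[
C^{-1}\mu^n\|e^\perp\|_{h_0}\leq C\lambda^n(2+\delta C_2 A^n),
\qquad \text{i.e.,}\qquad
\|e^\perp\|_{h_0}\leq C^2\left(\frac{\lambda}{\mu}\right)^n(2+\delta C_2 A^n).
\]
Third, optimize the free integer $n$ by taking the smallest $n\geq 0$ with $\delta A^n\geq 1$. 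Then $\delta A^n\leq A$, hence $\delta C_2 A^n\leq C_2 A$; and $n\geq -\log\delta/\log A$, hence $(\lambda/\mu)^n\leq \delta^{\log(\mu/\lambda)/\log A}$ since $\lambda/\mu<1$. Substituting yields precisely
\[
\|e^\perp\|_{h_0}\leq C^2(2+C_2 A)\,\delta^{\log(\mu/\lambda)/\log A}.
\]

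The main obstacle is minor and essentially bookkeeping: one must keep the direction of the metric comparison straight (the multiplicative factor is $1+\delta C_2 A^n$, not $\delta C_2 A^n$, which is why the constant $2$ appears in the final bound rather than $1$), and one must verify that no cone argument is needed because, unlike the matrix-difference setting of Lemma~\ref{le:seqmatrixesHolder}, here the $h_0$-orthogonality of $e$ and $e^\perp$ immediately gives $\|e\|_{h_0}\leq 1$ without any auxiliary constant $d$. The rest is a direct optimization in $n$.
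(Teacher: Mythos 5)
Your proof is correct and follows the paper's argument essentially line by line: the same triangle inequality in the $n$-th metric, the same use of the metric comparison together with the two Oseledets-type bounds, and the same choice of $n$ with $1\leq \delta A^n < A$. The only cosmetic difference is that you decompose a unit vector of $F$ in $E\oplus E^\perp$, whereas the paper starts with a unit vector of $E$ and decomposes in $F\oplus F^\perp$; by the symmetry of $\dist_{h_0}(E,F)$ this is a relabeling.
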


\begin{remark}
\label{rmk:fixed_time_bound}
In fact, in the proof below we do not need the inequalities
in the assumptions to hold for all $n$.  We only need to
know it for a value of $n$ such that $c_3\leq \delta A^n\leq
c_4$, for some fixed constants $c_3,c_4$.  Then the same
conclusion holds (with different constants, but same
$\delta$ and exponents).
\end{remark}

\begin{proof}
 Pick $e\in E$ with $\norm{e}_{h_0}=1$ and write it as $e=f+f^\perp$ where $f\in F, f^\perp\in F^\perp$.
 By the assumption on $F$ we have
 $$
 C^{-1}\mu^n \norm{f^\perp}_{h_0}\leq \norm{f^\perp}_{h_n^F}.
 $$
 We also have the following chain of inequalities
 \begin{align*}
  \norm{f^\perp}_{h_n^F}&= \norm{e-f}_{h_n^F} \leq \norm{e}_{h_n^F}+\norm{f}_{h_n^F}\\
  &\leq \norm{e}_{h_n^E}(1+\delta C_2 A^n) + C \lambda^n \norm{f}_{h_0} 
  && \textrm{by }\eqref{eqn:metric_assumption}\textrm{ for }\norm{e} \textrm{ and }\eqref{eqn:F_estimate} \textrm{ for }\norm{f}\\
  &\leq \norm{e}_{h_0} C\lambda^n (1+\delta C_2 A^n) + C \lambda^n
  && \textrm{by }\eqref{eqn:F_estimate}\textrm{ for }\norm{e} \textrm{ and since }\norm{f}_{h_0}\leq \norm{e}_{h_0}=1.
 \end{align*}
 Combining the two inequalities, we find that
 \begin{align*}
  \norm{f^\perp}_{h_0} &\leq 2C^2\left(\frac \lambda \mu \right)^n + C^2 C_2 \delta \left(\frac {\lambda A}\mu\right)^n\\
  & \leq C^2 \left(\frac \lambda \mu \right)^n (2+\delta C_2 A^n ).
 \end{align*}
 Choose now $n$ such that $1\leq \delta A^n<A$, i.e. $0\leq \log \delta + n\log A < \log A$.
 Then we have
 \begin{align*}
  \norm{f^\perp}_{h_0} & \leq C^2 (2+C_2A) e^{\left(\log \frac \lambda \mu \right) \cdot \frac {-\log \delta}{\log A}}\\
  &\leq C^2(2+C_2A) \delta^{\frac {\log \frac \mu \lambda} {\log A}}
 \end{align*}
 An analogous argument gives the estimate for vectors in $F$, thus proving the claim.
\end{proof}

\begin{remark}
 Suppose given two sequences of matrices $A_n, B_n$ with
 $$
 \norm{A_n-B_n} \leq \delta L^n
 $$
 Then we have the bound $ \norm{A_n v}\leq \norm{B_n v} + \delta L^n\norm{v}$.
 If we also had the Oseledets-type estimate
 $$
 \norm{v}\leq C\mu^{-n}\norm{B_n v}
 $$
 we could conclude $ \norm{A_n v} \leq \norm{B_n v}(1+\delta C L^n \mu^{-n} )$.
 If we defined a sequence of metrics by
 \begin{align*}
  \norm{v}_{h_n^A}&:= \norm{A_n v}\\
  \norm{v}_{h_n^B}&:= \norm{B_n v}
 \end{align*}
 then we could apply the above lemma (with $A=\frac L \mu$) and get a conclusion similar to that of Lemma 5.3.3 from \cite{BarPes2007}. 
 The advantage of using the metric formulation is that the metrics defined above can come from linear operators with the same source, but different targets.
\end{remark}

The next result allows us to intersect subspaces in the past and future Oseledets filtrations.

\begin{lemma}
\label{lemma:Grass_intersection}
 Let $K$ be a compact metric space and let $F_+,F_-$ be two H\"older continuous maps to Grassmanians
 $$
 F_\pm:K \to Gr(k_{\pm},\bR^d)
 $$
 with H\"older exponents $\nu_+,\nu_-$.  Suppose that for
 all $x\in K$, we have that $F_+(x)$ and $F_-(x)$ are in
 general position, i.e. intersect in a space of dimension
 $r:=k_+ + k_- - d$.
 
 Then the map
 $$
 I:K\to Gr(r,\bR^d)
 $$
 assigning to $x$ the intersection of $F_+(x)$ and $F_-(x)$
 is H\"older of exponent $\min(\nu_+,\nu_-)$.
\end{lemma}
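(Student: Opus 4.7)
My approach is to express $I(x)$ as the kernel of a H\"older family of linear maps of constant rank, and then apply an elementary perturbation argument to conclude that the kernel varies H\"older as well.

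Let $P_\pm(x)\colon \bR^d\to \bR^d$ denote the orthogonal projection onto $F_\pm(x)$. Using the graph parametrization behind \eqref{eq:normdist}, one checks that $P_\pm$ is H\"older of exponent $\nu_\pm$. Define the H\"older map
\begin{equation*}
A(x)\colon \bR^d\to \bR^d\oplus \bR^d, \qquad v\mapsto \bigl((I-P_+(x))v,\ (I-P_-(x))v\bigr),
\end{equation*}
which has H\"older exponent $\nu:=\min(\nu_+,\nu_-)$ and satisfies $\ker A(x)=F_+(x)\cap F_-(x)=I(x)$. The general-position hypothesis makes $\dim \ker A(x)=r$ constant, hence $\operatorname{rank} A(x)=d-r$ is constant.

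Fix $x_0\in K$, set $V_0=I(x_0)$, and choose a complement $V_0'$ with $\bR^d=V_0\oplus V_0'$. Since $A(x_0)$ is injective on $V_0'$, so is $A(x)|_{V_0'}$ for $x$ in a neighborhood of $x_0$, with a uniformly bounded inverse. Because $\dim\ker A(x)=\dim V_0$ is also constant, $I(x)$ must be the graph of a unique linear map $\phi(x)\colon V_0\to V_0'$. The identity $A(x)(v+\phi(x)v)=0$ combined with $A(x_0)v=0$ for $v\in V_0$ gives
\begin{equation*}
\phi(x)v=-(A(x)|_{V_0'})^{-1}(A(x)-A(x_0))v,\qquad \|\phi(x)\|\le C\,d(x,x_0)^{\nu}.
\end{equation*}
Since $\dist(I(x),V_0)$ is controlled by $\|\phi(x)\|$ via \eqref{eq:normdist}, this yields the claimed H\"older bound at $x_0$.

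The only nontrivial point is uniformity of the constants in $x_0$. Compactness of $K$ allows us to choose a finite cover of $K$ by neighborhoods on which the complement $V_0'$, the bound on $\|(A(x)|_{V_0'})^{-1}\|$, and the radius of validity can be fixed uniformly; piecing the local H\"older estimates together then produces a global H\"older estimate on $K$ with the asserted exponent $\nu=\min(\nu_+,\nu_-)$. This compactness step is the main, though routine, obstacle.
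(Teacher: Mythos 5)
Your proof is correct, but it follows a genuinely different route from the paper's. The paper argues intrinsically in the Grassmannian: it introduces the incidence variety $\Gamma=\{(V_+,V_-,I): I\subseteq V_+\cap V_-\}\subset Gr(k_+)\times Gr(k_-)\times Gr(r)$, notes that over the Zariski-open set $U$ of pairs in general position the projection $p_{12}\colon\Gamma\to Gr(k_+)\times Gr(k_-)$ is a smooth bijection, and then writes $I=p_3\circ p_{12,U}^{-1}\circ(F_+\times F_-)$, using compactness of the image to bound the derivative of $p_{12,U}^{-1}$. Your argument instead realizes $I(x)$ as the kernel of the explicitly constructed H\"older family $A(x)v=\bigl((I-P_+(x))v,(I-P_-(x))v\bigr)$ of constant rank $d-r$, then runs a perturbation argument: near a fixed $x_0$, $\ker A(x)$ is the graph over $V_0=I(x_0)$ of a linear map $\phi(x)$, and the formula $\phi(x)=-(A(x)|_{V_0'})^{-1}(A(x)-A(x_0))$ gives $\|\phi(x)\|\lesssim d(x,x_0)^\nu$. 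Both approaches lean on the same two ingredients --- general position is an open condition, and compactness of $K$ supplies uniform constants --- and both yield exponent $\min(\nu_+,\nu_-)$. The paper's proof is shorter and more conceptual, treating the smoothness of the algebraic inverse as a black box; yours is more elementary and explicit, reducing everything to matrix estimates, at the cost of a slightly more involved bookkeeping (choosing $V_0'=V_0^\perp$ to align with \eqref{eq:normdist}, and the final patching of local estimates into a global one, which as you note is routine since the Grassmannian has bounded diameter).
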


\begin{proof}
 We omit the $\bR^d$ from the notation for Grassmanians.
 Inside $Gr(k_+)\times Gr(k_-)\times Gr(r)$ define the closed algebraic set
 $$
 \Gamma:=\left\{ (V_+,V_-,I) : I\subseteq V_+\cap V_-\right\}.
 $$
 We have projections
 \begin{align*}
  p_{12}&:\Gamma \to Gr(k_+)\times Gr(k_-)\\
  p_3 & :\Gamma \to Gr(r).
 \end{align*}
 Over the Zariski-open set $U\subset Gr(k_+)\times Gr(k_-)$
 of planes in general position we know that the map
 $$
 p_{12}: \left(\Gamma \cap p^{-1}_{12}(U)\right) \to U
 $$
 is a smooth (even algebraic) bijection.  Denote its inverse
 by $p_{12,U}^{-1}$ assumed to take values in $Gr(k_+)\times
 Gr(k_-)\times Gr(r)$.
 
 By assumption, the map $F_+\times F_- : K\to U$ lands
 inside the open set, and has compact image.  In particular,
 the first derivative of $p_{12,U}^{-1}$ is uniformly
 bounded on this image.
 
 The function $I$ giving the intersection of $F_+$ and $F_-$
 over $K$ can now be written as the composition $p_3\circ
 p_{12,U}^{-1}\circ (F_+\times F_-)$.  It is therefore
 H\"older, with exponent $\min (\nu_+,\nu_-)$.
\end{proof}

We next explain how cocycles on vector bundles can be
reduced to cocycles over matrices.

\begin{lemma}
\label{lemma:isometric_trivialization}
Suppose $V\to X$ is a vector bundle with metric over a
connected manifold $X$.  Then we can find another vector
bundle with metric $W\to X$ and an isomorphism $V\oplus
W\cong \underline{\bR}^N$.
 
Here $\underline{\bR}^N$ denotes the trivial vector bundle
over $X$, with the standard metric.  The regularity of $W$
and the isomorphism can be taken as good as that of $V$ and
$X$.
\end{lemma}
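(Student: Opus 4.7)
The plan is to realize $W$ as the orthogonal complement of an isometric embedding of $V$ into a trivial bundle of sufficiently high finite rank. The key tool is the classical fact that any vector bundle over a paracompact base of finite covering dimension is a direct summand of a trivial bundle of finite rank, combined with a partition of unity argument that makes the embedding isometric for the given metric on $V$.

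First I would cover $X$ by finitely many open sets $U_1,\dots,U_m$ over each of which $V$ admits a trivialization. For a connected manifold of dimension $n$ one can invoke the standard covering-dimension argument: refine a locally finite trivializing cover into $n+1$ groups of pairwise disjoint opens; on each union of a group $V$ is trivial since it is trivial on each component. This yields a trivializing cover of cardinality $m\le n+1$. On each $U_i$, applying Gram--Schmidt to any local frame (in the regularity class of $V$) produces a fibrewise isometric trivialization $\phi_i\colon V|_{U_i}\to U_i\times\bR^r$, where $r$ is the rank of $V$.

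Next, pick a partition of unity $\{\rho_i\}$ subordinate to $\{U_i\}$ in the appropriate regularity class, and define
\begin{align*}
  \Phi\colon V\to X\times\bR^{mr},\qquad \Phi(v)=\bigl(\pi(v),\,\sqrt{\rho_1(\pi v)}\,\phi_1(v),\dots,\sqrt{\rho_m(\pi v)}\,\phi_m(v)\bigr),
\end{align*}
each slot being extended by zero outside $\mathrm{supp}\,\rho_i$. Because $\sum_i\rho_i(x)=1$ at every $x$ and every $\phi_i$ is a fibrewise isometry, one computes on each fiber
\begin{align*}
  \norm{\Phi(v)}^2=\sum_i\rho_i(\pi v)\,\norm{\phi_i(v)}^2=\sum_i\rho_i(\pi v)\,\norm{v}^2_V=\norm{v}^2_V,
\end{align*}
so $\Phi$ is a fibrewise isometric linear embedding into the trivial bundle $\underline{\bR}^N$ with $N=mr$ equipped with its standard metric.

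Finally, set $W:=\Phi(V)^{\perp}$ inside $\underline{\bR}^N$, with the induced metric and projection to $X$. Since $\Phi(V)$ varies with the same regularity as $V$, $X$ and the $\rho_i$, the orthogonal projection onto $\Phi(V)$ (given by a smooth formula in the local frame) has the same regularity, and hence $W$ is a subbundle of the required regularity; by construction $V\oplus W\cong\underline{\bR}^N$ isometrically. The main technical obstacle is the very first step, producing a \emph{finite} trivializing cover on a possibly non-compact manifold; this is precisely where one uses that $X$ is a finite-dimensional connected manifold rather than an arbitrary paracompact space, and once this is settled the remaining construction is routine.
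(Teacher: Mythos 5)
Your proof is correct, and it takes a cleaner route than the paper on the isometry step. Both arguments begin identically: use the finite covering dimension of the manifold to produce a trivializing cover of cardinality at most $n+1$, then build an embedding of $V$ into a trivial bundle via a partition of unity. The difference is in how isometry is achieved. The paper first constructs a (generally non-isometric) bundle surjection $\underline{\bR}^{d(n+1)}\onto V$ by spreading out local frames with a partition of unity, inverts it to an injection $V\hookrightarrow\underline{\bR}^N$, and then \emph{corrects the metric afterwards}: the pushed-forward metric $h'$ differs from the target metric, and since the space of inner products $\SL_d(\bR)/SO_d(\bR)$ is contractible with unique geodesics, one can canonically precompose with fiberwise operators to make the injection isometric. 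You instead achieve isometry directly, by first using Gram--Schmidt to make each local trivialization $\phi_i$ a fiberwise isometry and then weighting by $\sqrt{\rho_i}$, so that $\sum_i\rho_i\norm{\phi_i(v)}^2=\norm{v}^2_V$ holds identically. Your version is shorter, more explicit, and gives the canonical ``square-root of a partition of unity'' embedding that appears in many textbooks; the paper's version is a bit heavier but has the mild advantage of never taking square roots of the $\rho_i$.

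One small caveat you should address to fully justify the regularity claim: if the $\rho_i$ are merely smooth bump functions, $\sqrt{\rho_i}$ need not be smooth where $\rho_i$ vanishes to first order, so the map $\Phi$ could lose regularity at the boundaries of the supports. The standard fix is to choose the weights as a ``square partition of unity'' from the start: given any partition of unity $\tilde\rho_i$ subordinate to $\{U_i\}$, set $\sigma_i:=\tilde\rho_i\big/\bigl(\sum_j\tilde\rho_j^2\bigr)^{1/2}$, which is smooth (the denominator is everywhere positive), supported in $U_i$, and satisfies $\sum_i\sigma_i^2=1$; then use the $\sigma_i$ in place of $\sqrt{\rho_i}$ in the definition of $\Phi$. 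With that substitution your argument is complete and matches the paper's conclusion.
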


\begin{proof}
  First, we show that $V$ can be trivialized using finitely
  many open sets (not necessarily connected).  Then we find
  the isometric embedding.
 
  To begin, since $X$ is a manifold it has finite Lebesgue
  covering dimension, say $n$.  Therefore, there exist a
  cover of $X$ by open sets $U_\alpha$ such that at most
  $n+1$ intersect at any point, and we have trivializations
  $U_\alpha\times \bR^d\cong \left. V \right|_{U_\alpha}$.
 
  Take a partition of unity $\phi_\alpha$ subordinated to
  the covering, and for any set
  $I=\{\alpha_1,\ldots,\alpha_i\}$ define the open sets
 $$
 W_I:=\{x\in X : \phi_{\alpha}(x) < \phi_{\alpha_j}(x), \forall \alpha\notin I, \alpha_j\in I  \}
 $$
 The sets $W_I$ are open by the local finiteness of the
 covering, and $V$ can be trivialized on each of them.
 Furthermore, if $I\neq I'$ and of the same cardinality,
 then $W_I\cap W_{I'}=\emptyset$.  We can thus define the
 $n+1$ open sets
 $$
 W_i := \bigcup_{\# I =i}W_I \hskip 1cm i=1,\ldots,n+1
 $$
 These sets are open, give a cover of $X$ and $V$ is
 trivialized on each of them.
 
 Next, take a partition of unity $\rho_i$ adapted to $W_i$.
 Let $v_1^{(i)},\ldots, v_d^{(i)}$ be the global sections of
 $V$ obtained from the coordinate sections in each
 trivialization, multiplied by the $\rho_i$ to have support
 in $W_i$.
 
 We can now define a surjection
 \begin{align*}
   X\times \bR^{d(n+1)} \onto &V\\
   \overrightarrow{e}_{(i-1)d+j} \mapsto &v_j^{(i)} \hskip
   0.5in i=1,\ldots,n+1, \hskip 0.5in j=1,\ldots,d
 \end{align*}
 Here $\overrightarrow{e}_l$ denotes the $l$-th coordinate
 vector in $\bR^{d(n+1)}$.
 
 Let $W$ be the kernel bundle of the surjection just
 defined, and let $V'$ be its orthogonal complement.  The
 surjection induces an isomorphism $V'\to V$, which we can
 invert to an injection $V\to \underline{\bR^N}$.
 
 Letting $h$ be the initial metric on $V$, the embedding
 just described induces a different metric $h'$.  The space
 of metrics on a fiber of the bundle is
 $\SL_d(\bR)/SO_d(\bR)$, which is contractible.  Moreover,
 we have unique geodesics given as exponentials of symmetric
 operators (symmetric for the underlying metric).
 See for example \cite[pg. 324]{BH_npc} for a discussion of these properties.
 
 This means that precomposing the injection $V\to
 \underline{\bR}^N$ with uniquely defined linear operators
 on the fibers of $V$, we can arrange the map to be
 isometric.  We have thus obtained the desired isometric
 decomposition $ V\oplus W = \underline{\bR}^N $
\end{proof}

\subsection{Applications to the Kontsevich-Zorich cocycle}

We refer to \cite{Zorich_survey} for a general introduction
to flat surfaces and the Kontsevich-Zorich (KZ) cocycle.
The results of Eskin, Mirzakhani, and Mohammadi
\cite{EM,EMM} lead us to consider affine invariant
manifolds, since those support the $\SL_2\bR$-invariant
measures.

In this section, we prove two types of results in that
context.  The first one is that the main theorem
\ref{mthm:Holder-bundles-vectorbundle} applies to the KZ
cocycle.  The second one is similar to the results of
Chaika-Eskin \cite{Chaika-Eskin}.

\begin{theorem}
\label{thm:KZ_global}
Let $\cM$ be an affine invariant manifold and let $\mu$ be
the corresponding ergodic $\SL_2\bR$-invariant probability
measure (see \cite{EM}).  Let $E$ be the Kontsevich-Zorich
cocycle (or any of its tensor powers) and let
$\{\lambda_i\}$ be its Lyapunov exponents, with Oseledets
subspaces $E^{i}$.
 
Then there exists $\nu_i>0$ such that for any $\epsilon>0$
there exists a compact set $K_\epsilon$ with
$\mu(K_\epsilon)>1-\epsilon$ and such that the spaces $E^i$
vary $\nu_i$-H\"older continuously on $K_\epsilon$.
\end{theorem}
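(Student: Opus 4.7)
The plan is to apply Corollary \ref{mcor:multcocycleflow} to the KZ cocycle viewed as a linear multiplicative cocycle over the Teichm\"uller flow $g_t$ on $\cM$, and then to extract the individual Oseledets subspaces $E^i$ via Lemma \ref{lemma:Grass_intersection} as intersections of forward and backward filtrations.

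First I would fix the norms. The Hodge norm blows up along the thin part of the stratum, so it is incompatible with the uniform H\"older hypotheses of Theorem \ref{mthm:Holder-bundles-vectorbundle}. Instead I would use the norm built by Avila, Gou\"ezel and Yoccoz in \cite{AGY,AG}, which is comparable to the Hodge norm on compact subsets, behaves controllably near the thin part, and crucially has the property that the Teichm\"uller flow is Lipschitz and the KZ cocycle is locally Lipschitz in the associated trivializations. Using Lemma \ref{lemma:isometric_trivialization} I would then isometrically embed the Hodge bundle (with the AGY metric) into a trivial bundle $\underline{\bR}^N$, reducing everything to a matrix cocycle to which the framework of Section \ref{sec:case-vector-bundle-1} applies.

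Next I would verify admissibility with respect to $f=g_1$. For the partition condition \eqref{eq:innerball}, I would follow Remark \ref{rmk:admissible-cocycle}: the log-distance to the partition boundary is $\mu$-integrable because $\mu(\cM)<\infty$ and the blow-up near the thin part is only polynomial (the integrability estimates in \cite{AGY} being the essential input), while the Lipschitz property of $g_1$ in AGY coordinates handles the rest. The iterated H\"older bound \eqref{eq:Holder-bundle-automorph} follows from the one-step Lipschitz bound on the cocycle in AGY trivializations, combined with an inductive argument identical to the proof of Lemma \ref{le:Mane}; the ambient bound $\|A^n\|\le L^{|n|}$ required there is not uniform on all of $\cM$, but holds on the regular block $\Lambda_\epsilon^\ell$ of Section \ref{sec:regular-sets} by a Luzin/Birkhoff regularity argument, using the log-integrability of $\log^+\|A(x,t)\|$ (a known property of the KZ cocycle in the AGY norm).

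Granted admissibility, Theorem \ref{mthm:Holder-bundles-vectorbundle} applied to $g_1$ yields a compact $\Lambda_\epsilon^+$ with $\mu(\Lambda_\epsilon^+)\ge 1-\epsilon/2$ on which the forward filtrations $F^i_x$ vary $\nu_i^+$-H\"older continuously. Applying the same argument to the time-reversed flow $g_{-t}$ gives an analogous set $\Lambda_\epsilon^-$ carrying $\nu_i^-$-H\"older backward filtrations $F^{i,-}_x:=\bigoplus_{j\ge i} E^j_x$. On $K_\epsilon:=\Lambda_\epsilon^+\cap\Lambda_\epsilon^-$, each Oseledets subspace satisfies $E^i_x=F^i_x\cap F^{i,-}_x$ and is therefore the intersection of two H\"older Grassmannian maps in general position, so Lemma \ref{lemma:Grass_intersection} delivers $\nu_i:=\min(\nu_i^+,\nu_i^-)$-H\"older continuity of $E^i$ on $K_\epsilon$, as required. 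The main obstacle is verifying \eqref{eq:Holder-bundle-automorph} and \eqref{eq:innerball} near the boundary of moduli space: the KZ cocycle is locally constant in period coordinates via Gauss-Manin, so its H\"older behaviour comes entirely from how AGY charts overlap, and this is precisely the point at which the technical input from \cite{AGY,AG} is indispensable.
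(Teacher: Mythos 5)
Your proposal takes a genuinely different route from the paper, and the route has an unfilled gap that the paper deliberately engineered around.

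You plan to feed the KZ cocycle through the admissible vector-bundle-automorphism framework of Theorem~\ref{mthm:Holder-bundles-vectorbundle} and Corollary~\ref{mcor:multcocycleflow}, which requires verifying the partition condition~\eqref{eq:innerball} and the iterated H\"older bound~\eqref{eq:Holder-bundle-automorph}. But the paper opens Section~\ref{sec:KZ} by stating explicitly that these two conditions are ``non-trivial to verify'' for the KZ cocycle and that the theorem is therefore ``dealt with separately'' --- i.e.\ the authors intentionally avoid the very reduction you propose. The paper's actual argument does not use Theorem~\ref{mthm:Holder-bundles-vectorbundle} at all. Instead it replaces the matrix-difference estimate of Lemma~\ref{le:seqmatrixesHolder} by its norm reformulation, Lemma~\ref{lemma:metrics_dist_bound}, exploits the fact that the forward Oseledets filtration $F^i$ is constant along unstable leaves so that only $y\in W^u(x)$ need be compared, and then uses the AGY exponential map together with the Gauss--Manin transport and the two-sided local comparison \eqref{eqn:AGY_local_bound} to verify the hypotheses of Lemma~\ref{lemma:metrics_dist_bound} directly at a single well-chosen time $t=-\frac12\log\delta - 2$ (Remark~\ref{rmk:fixed_time_bound}). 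The constant $A=e^2$ from \eqref{eqn:AGY_top_exp_bound} then gives the explicit exponent $\nu_i = \tfrac12\min\bigl(\log\tfrac{\lambda_i}{\lambda_{i+1}},\log\tfrac{\lambda_{i-1}}{\lambda_i}\bigr)-\epsilon'$. Lemma~\ref{lemma:isometric_trivialization} is introduced in Section~\ref{sec:KZ} for completeness but is not invoked in the proof of Theorem~\ref{thm:KZ_global}.

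The concrete gap in your version: you assert that \eqref{eq:Holder-bundle-automorph} ``follows from the one-step Lipschitz bound on the cocycle in AGY trivializations, combined with an inductive argument identical to the proof of Lemma~\ref{le:Mane}.'' That is precisely the step that is not established. Lemma~\ref{le:Mane} needs a uniform one-step H\"older constant in fixed trivializations together with an exponential norm bound along the comparison orbit, whereas for the KZ cocycle the subtlety is entirely in how trivializations overlap near the thin part of the stratum; moreover the norm bound in~\eqref{eq:expboundnorm} is only obtained on a regular block, not on a neighborhood that covers the full orbit segment, so the induction does not close as stated. You acknowledge this is ``the main obstacle'' but do not resolve it. What you are missing is the observation that restricting the comparison to points $y\in W^u(x)$, and transporting vectors by Gauss--Manin along the AGY exponential map, entirely sidesteps the need for a global admissible trivialization: one only ever compares the norms $\norm{\cdot}_{x,t}$ and $\norm{\cdot}_{y,t}$ after flowing a fixed time $t$ chosen as a function of $\delta=\norm{v}_x$, and \eqref{eqn:AGY_local_bound} delivers exactly the hypothesis~\eqref{eqn:metric_assumption} that Lemma~\ref{lemma:metrics_dist_bound} needs. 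Your use of the AGY norm and of Lemma~\ref{lemma:Grass_intersection} to combine the forward and backward filtrations is correct and matches the paper, but the core reduction to H\"older continuity of each filtration is the part where your argument is incomplete.
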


\paragraph{\textbf{The AGY norm}}
Before the start the proof, we recall some properties of a norm defined by Avila-Gou\"ezel-Yoccoz (see \cite[Sect. 2.2.2]{AGY} and \cite[Sect. 5]{AG}).
For the definitions below, we potentially need to pass to a finite cover of $\cM$ to avoid orbifold issues.

Denote by $H^1_{rel}$ the real bundle of relative cohomology.
We denote by $\omega$ both a point in the affine manifold $\cM$ and the cohomology class in $H^1_{rel}$ that it represents.
The norm on $H^1_{rel}$ is then defined by
\begin{align}
  \label{eqn:AGY_def}
\norm{\alpha}_\omega := \sup_{\textrm{saddle } \gamma} \frac{|\alpha(\gamma)|}{|\omega(\gamma)|}
\end{align}
Recall that $\gamma$ is \emph{a saddle} if on the flat surface represented by $\omega$, the class $\gamma$ can be realized as a straight line connecting two singular points of the flat metric.

Identifying the relative cohomology with the tangent space to the stratum, the expression \eqref{eqn:AGY_def} gives a complete metric (\cite[Cor. 2.13]{AGY}).
Given $x\in \cM$, denote by $W^u(x)$ the unstable leaf through $x$, and by $E^u(x)\subseteq H^1_{rel}(x)$ the tangent space to the unstable leaf.
We have the exponential map (linear in period coordinates)
\begin{align*}
 \Psi_x:H^1_{rel}(x) &\to W^u(x)\\
 \alpha & \mapsto x+\alpha
\end{align*}

The main properties of the metric relative to the Teichm\"uller flow are summarized below.
We denote by $Dg_t$ the induced cocycle on the tangent bundle, and by $g_t$ the Kontsevich-Zorich cocycle (i.e. the flat Gauss-Manin connection).

\begin{enumerate}
 \item \cite[ineq. 2.13]{AGY} For $x\in \cM$, $v\in H^1_{rel}(x)$ we have the growth bounds
 \begin{align}
   \label{eqn:AGY_top_exp_bound}
 e^{-2t}\norm{v}_x \leq \norm{Dg_t v}_{g_tx}\leq e^{2t}\norm{v}_x
\end{align}
 \item \cite[Prop. 5.3]{AG} If we denote by $B(0,r)_x$ the ball of radius $r$ in $H^1_{rel}(x)$ then $\Psi_x$ is well-defined on $B(0,1/2)$ for all $x\in \cM$, and for all $v\in B(0,1/2)$ we have
 \begin{align}
 d_{W^u(x)}(x,\Psi_x(v)) \leq 2\norm{v}_x
\end{align}
 \item \cite[Prop. 5.3]{AG} For all $w\in E^u(x)$ and $v\in B(0,1/2)$ we have
 \begin{align}
   \label{eqn:AGY_local_bound}
 \frac 12 \norm{w}_{\Psi_x(v)} \leq \norm{w}_x \leq 2 \norm{w}_{\Psi_x(v)}
\end{align}
 In the above inequality, the vector $w$ is transported to $\Psi_x(v)$ using the flat connection along the exponential map.
\end{enumerate}

In order to invoke Lemma \ref{lemma:metrics_dist_bound}, we
note that its proof works in the same way for Finsler
metrics.  The distance between Finsler metrics is defined by
equation \eqref{eqn:dist_metric}.

Alternatively, to a Finsler metric one can associate canonically the Riemannian metric coming from the John ellipsoid construction (see \cite[Thm. 3.1]{ball_convex}), and each controls the other by absolute constants (depending on dimension only).
Recall that given a bounded convex body $K\subset \bR^n$, there is a canonically associated ellipsoid $E\subseteq K$ (the John ellipsoid) with the following two properties.
First, $E$ is the ellipsoid of maximal volume and satisfying $E\subseteq K$, and moreover $K\subseteq \sqrt{n} E$, where $\sqrt{n}E$ denotes the rescaling by $\sqrt{n}$ based at the center of $E$.
Interpreting a convex body as the unit ball of a norm, this means that we can replace a Finsler metric with a canonical inner product, such that each is within a bounded factor of the other.

\begin{proof}[Proof of Theorem \ref{thm:KZ_global}]
 We prove H\"older continuity for the forward Oseledets filtration, the proof for the backward one being similar.
 Combining them using Lemma \ref{lemma:Grass_intersection}, the desired claim follows.
 
 Define the forward Oseledets filtration by
 $$
 F^i(\theta)_x := \oplus_{i\geq j} E^j(\theta)_x
 $$ 
 Fix $\epsilon_1>0$ arbitrarily small.
 Then for $C$ sufficiently large, there exists a compact set $K_\epsilon\subseteq \cM$ of measure at least $1-\epsilon/10$ on which the Oseledets theorem holds uniformly:
 $$
 C^{-1} e^{(\lambda_i-\epsilon_1)t}\norm{v_i}\leq \norm{g_t v_i}\leq C e^{(\lambda_i+\epsilon_1)t} \norm{v_i}
 \hskip 0.5cm \forall i,\forall v_i \in E^i_x, \forall x\in K_\epsilon
 $$
 By passing to a further compact subset, we can assume the angle between Oseledets subspaces is uniformly bounded away from zero.
 
 Since the forward Oseledets filtration only depends on the position of the point on unstable leaves, given $x$ and $y\in W^u(x)$, it suffices to estimate $\dist(F^i(x),F^i(y))$ in terms of $d_{W^u(x)}(x,y)$.
 Assuming $d_{W^u(x)}(x,y)\leq 1/4$, using the properties of the AGY norm we have $y=\Psi_x(v)$ for some $v\in H^1_{rel}(x)$.
 
 Denote by $GM_v$ the Gauss-Manin connection from $x$ to $x+v$ along the exponential map.
 If $w\in E_x$ is a vector, we define new norms by flowing forward:
 $$
 \norm{w}_{x,t} := \norm{g_t w}_{g_tx}
 $$
 A similar norm is defined on $E_y$ and we wish to compare $\norm{w}_{x,t}$ and $\norm{GM_v(w)}_{y,t}$.
 
 Set $\delta=\norm{v}_x$ and $t=\frac {-1}2 \log\delta - 2$.
 Note that with this choice we have $1/100<\delta e^{2t}< 1/4$.
 We would like to apply Lemma \ref{lemma:metrics_dist_bound} (see Remark \ref{rmk:fixed_time_bound} for why it suffices to check assumptions at a well-chosen time).
 The Oseledets-type assumptions of Lemma \ref{lemma:metrics_dist_bound} (i.e. \eqref{eqn:F_estimate} and \eqref{eqn:F_estimate_down}) on the filtrations hold by the choice of compact sets.
 We need to estimate the distance between norms, i.e. check assumption \eqref{eqn:metric_assumption} in Lemma \ref{lemma:metrics_dist_bound}.
 
 Thus, given $w\in E_x$, we need to estimate the ratio of $\norm{g_t w}_{g_t x}$ and $\norm{GM_{Dg_t v}(g_t w)}_{g_t y}$.
 By our choice of $t$ and using property \eqref{eqn:AGY_top_exp_bound} on the AGY norm, we have $\norm{Dg_t v}_{g_t x}\leq 1/2$
 Therefore, using property \eqref{eqn:AGY_local_bound} of the AGY norm we find
 $$
 \frac 12 \norm{GM_{Dg_t v} (g_t w)}_{g_ty} \leq  \norm{g_t w}_{g_t x}  \leq 2 \norm{GM_{Dg_t v} (g_t w)}_{g_ty}
 $$
 Therefore, the assumptions of Lemma \ref{lemma:metrics_dist_bound} are satisfied, with $A=e^2$ (coming from property \eqref{eqn:AGY_top_exp_bound} of the AGY norm).
 We can now conclude the forward Oseledets filtration $F^i$ varies H\"older-continuously with exponent $\frac 1{2} \log \frac {\lambda_i}{\lambda_{i+1}}- \epsilon'$.
 Combining the forward and backward filtrations using Lemma \ref{lemma:Grass_intersection}, we find individual Oseledets subspaces $E^i$ are H\"older with exponent $\nu_i:=\frac 1{2} \min \left(\log \frac {\lambda_i}{\lambda_{i+1}}, \log \frac {\lambda_{i-1}}{\lambda_i} \right) - \epsilon'$.
\end{proof}

Following Chaika-Eskin \cite{Chaika-Eskin}, the next result refines the above one when restricting to a single $\SL_2\bR$-orbit.
Recall the notation for $1$-parameter subgroups of $\SL_2\bR$:
\begin{align*}
 g_t:=\begin{bmatrix}
       e^t & 0 \\
       0 & e^{-t}
      \end{bmatrix}
      \hskip 0.5cm
 r_\theta := \begin{bmatrix}
              \cos \theta & -\sin \theta\\
              \sin \theta & \cos \theta
             \end{bmatrix}
      \hskip 0.5cm
 g_t^\theta:= r_{\theta}^{-1} g_t r_\theta & 
\end{align*}

\begin{theorem}
\label{thm:KZ_disk}
 Let $x\in \cH(\kappa)$ be a flat surface in a stratum and let $E$ be the Kontsevich-Zorich cocycle.
 If $E$ is some tensor power of the KZ cocycle, let $k\geq 1$ be the smallest order of the tensor product which contains it.
 
 For $g\in \SL_2\bR$ denote by $A(g,x):E_x \to E_{gx}$ the matrix of the cocycle $E$.
 By Theorem 1.2 in \cite{Chaika-Eskin} there exist $\lambda_1>\cdots >\lambda_k$ such that for a.e. $\theta\in S^1$ we have \begin{enumerate}
  \item There exists a decomposition $E_x = \oplus_i E^{i}(\theta)_x$ with $E^{i}(\theta)_x$ measurably varying in $\theta$
  \item We have
  \begin{align*}
   \lim_{t\to \pm \infty} \frac 1 t \log \norm{A(g_t^\theta, x) v_i} = \lambda_i \hskip 0.5cm \forall v_i\in E^i(\theta)_x
  \end{align*}
 \end{enumerate}
 Let $\nu_i:=\frac 1{2k} \min \left(\log \frac {\lambda_i}{\lambda_{i+1}}, \log \frac {\lambda_{i-1}}{\lambda_i} \right) - \epsilon'$, for arbitrarily small $\epsilon'$.
 
 Then for any $\epsilon>0$ there exists a compact set $K_\epsilon\subseteq S^1$ of Lebesgue measure at least $1-\epsilon$ such that the subspaces $E_i(\theta)$ vary $\nu_i$-H\"older continuously for $\theta\in K_\epsilon$. 
\end{theorem}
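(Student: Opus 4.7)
The strategy mimics that of Theorem \ref{thm:KZ_global}, with the direction $\theta \in S^1$ playing the role that basepoints in $\cM$ played there. By Theorem~1.2 of Chaika-Eskin together with Egorov's theorem, for any $\epsilon>0$ we extract a compact $K_\epsilon \subseteq S^1$ of Lebesgue measure at least $1-\epsilon$ on which two-sided Oseledets convergence is uniform and the angles between distinct $E^i(\theta)_x$ are uniformly bounded below. It suffices to prove H\"older continuity for the forward and backward Oseledets filtrations separately; Lemma \ref{lemma:Grass_intersection} then delivers H\"older continuity of each individual Oseledets subspace $E^i(\theta)_x$ with the claimed exponent.

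The replacement for the unstable-leaf comparison of Theorem \ref{thm:KZ_global} is the $\SL_2\bR$ commutation identity
$$ g_t^{\theta_2} \bigl(g_t^{\theta_1}\bigr)^{-1} = r_{-\theta_2} \bigl(g_t r_\phi g_{-t}\bigr) r_{\theta_1}, \qquad \phi := \theta_2 - \theta_1. $$
Setting $t := -\tfrac{1}{2}\log|\sin \phi|$, the off-diagonal entries of $g_t r_\phi g_{-t} = \bigl(\begin{smallmatrix}\cos\phi & -e^{2t}\sin\phi \\ e^{-2t}\sin\phi & \cos\phi\end{smallmatrix}\bigr)$ stay in a bounded range. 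The element $g_t r_\phi g_{-t}$ then lies in a compact neighborhood of a unipotent $n_s$ with $|s|$ of order one, so the two flowed points $g_t^{\theta_2} x$ and $g_t^{\theta_1} x$ are joined by a bounded horocyclic displacement; that is, they lie at bounded distance on a common unstable leaf after a fixed rotation. Thus the problem is reduced to an AGY-style comparison at time $t$ on a unit-sized unstable segment, exactly as in the proof of Theorem \ref{thm:KZ_global}.

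With this reduction, introduce the norms $\|w\|_{\theta,t} := \|A(g_t^\theta, x)w\|_{g_t^\theta x}$ on $E_x$, where the right-hand side uses the AGY norm (or its $k$-fold tensor power if $E$ sits in an order-$k$ tensor power of $H^1$). The uniform Oseledets estimates on $K_\epsilon$ furnish the growth inequalities \eqref{eqn:F_estimate}--\eqref{eqn:F_estimate_down} needed in Lemma \ref{lemma:metrics_dist_bound}. The AGY locality bound \eqref{eqn:AGY_local_bound}, applied $k$-fold along the unit-sized unstable segment produced above and propagated through the Gauss-Manin connection, supplies the metric-proximity hypothesis \eqref{eqn:metric_assumption} with a uniform constant. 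Invoking Lemma \ref{lemma:metrics_dist_bound} at the single time $t$ above (Remark \ref{rmk:fixed_time_bound}), with base $A = e^{2k}$ because the cocycle norm on a $k$-th tensor power of $H^1$ is bounded by $e^{2kt}$, yields H\"older continuity of the forward filtration with exponent $\tfrac{1}{2k}\log(\lambda_i/\lambda_{i+1}) - \epsilon'$; a symmetric argument handles the backward filtration. The principal technical obstacle is a uniform verification of \eqref{eqn:metric_assumption}: one must track Gauss-Manin transport along the bridging unstable arc through the $k$-fold tensor structure while absorbing the rotational and neutral components of the correction element $r_{-\theta_2}(g_t r_\phi g_{-t})r_{\theta_1}$, and it is precisely this tensorial growth base $e^{2k}$ that forces the factor $\tfrac{1}{k}$ in the final exponent.
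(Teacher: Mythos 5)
Your proposal correctly reproduces the global framework of the paper's proof: extract a compact set via Egorov (and Lusin for the angle bound), establish H\"older continuity of the forward and backward Oseledets filtrations separately, glue them with Lemma~\ref{lemma:Grass_intersection}, and run the forward estimate through Lemma~\ref{lemma:metrics_dist_bound} using the norms $\|w\|_{\theta,t}:=\|A(g_t^\theta,x)w\|$. You also track the exponent bookkeeping correctly, identifying $A=e^{2k}$ as the base and arriving at $\nu_i=\tfrac{1}{2k}\log(\lambda_i/\lambda_{i+1})-\epsilon'$. The divergence is entirely in how you propose to verify the metric--proximity hypothesis~\eqref{eqn:metric_assumption}, which is the one genuinely new step in this theorem.

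The paper dispatches that step in two lines: Forni's bound $\|A(x,gx)\|\le e^{k\cdot\dist(x,gx)}$ for the cocycle restricted to a Teichm\"uller disk (cited to~\cite{Forni_deviations}), combined with the elementary hyperbolic-geometry estimate $e^{\frac12\dist(g_t^{\theta_1}x,\,g_t^{\theta_2}x)}\le C_3|\theta_1-\theta_2|e^t$ for $t$ bounded away from zero. You instead propose the $\SL_2\bR$ commutation identity $g_t^{\theta_2}(g_t^{\theta_1})^{-1}=r_{-\theta_2}(g_tr_\phi g_{-t})r_{\theta_1}$ and a reduction to AGY-style unstable-leaf estimates, but this is not carried to completion, and you explicitly flag the crux as a ``principal technical obstacle.'' That is a genuine gap: (i) the correction element $r_{-\theta_2}(g_tr_\phi g_{-t})r_{\theta_1}$ is a product of two distinct rotations and a nearly-unipotent element, so the assertion that the two flowed points ``lie at bounded distance on a common unstable leaf after a fixed rotation'' is not accurate --- the outer rotations take you off the unstable leaf and must be absorbed using $\SO(2)$-invariance of the Hodge norm, which you do not invoke; (ii) the AGY locality estimate~\eqref{eqn:AGY_local_bound} is stated for displacements of AGY-norm at most $1/2$, whereas the unipotent piece $g_tr_\phi g_{-t}$ at your chosen time has off-diagonal entry of size $\approx 1$, i.e.\ at or outside the range where~\eqref{eqn:AGY_local_bound} applies; (iii) even granting these, propagating the estimate through the $k$-fold tensor structure is precisely the content of Forni's bound, so the route, if completed, would amount to re-deriving that result rather than shortcutting it. The paper's appeal to Forni's bound plus one line of hyperbolic geometry replaces all of this; without it, your argument is a plausible plan but not a proof of~\eqref{eqn:metric_assumption}.
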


\begin{proof}
 First we show that the forward (resp. backward) Oseledets filtrations are H\"older.
 Then we combine the results to conclude individual subspaces are H\"older.
 
 Define the forward Oseledets filtration by
 $$
 F^i(\theta)_x := \oplus_{i\geq j} E^j(\theta)_x
 $$
 Define also norms on $E_x$ by
 $$
 \norm{v}_{g_t^\theta}:= \norm{A(g_t^\theta,x)v} 
 $$
 
 Fix $\epsilon_1>0$ arbitrarily small.
 Then for $C$ sufficiently large, there exists a compact set $K_1\subseteq S^1$ of measure at least $1-\epsilon/10$ on which the Oseledets theorem holds uniformly:
 $$
 C^{-1} e^{(\lambda_i-\epsilon_1)t}\norm{v_i}\leq \norm{v_i}_{g_t^\theta}\leq C e^{(\lambda_i+\epsilon_1)t} \norm{v_i}
 \hskip 0.5cm \forall i,\forall v_i \in E^i(\theta)_x, \forall \theta\in K_1
 $$
 By Lusin's theorem, we can further restrict to a compact subset $K_2\subseteq K_1$ of measure at least $1-\epsilon$ such that $E_i(\theta)$ vary continuously on it.
 In particular, the distance between $E^i(\theta)$ and $E^j(\theta)$ for $i\neq j$ is uniformly bounded away from zero on $K_2$.
 
 Consider now the decomposition $E=F^i(\theta)\oplus F^i(\theta)^\perp$.
 From the uniform boundedness of the angle between Oseledets subspaces, there exists $C_1>0$ such that
 \begin{align*}
    \norm{v}_{g_t^\theta} \leq C_1 e^{(\lambda_i+\epsilon_1)t} \norm{v}  \hskip 1in \forall v\in F^i(\theta)\\
 C_1^{-1} e^{(\lambda_{i-1}-\epsilon_1)t} \leq \norm{v}_{g_t^\theta}     \hskip 1in \forall v \in F^i(\theta)^\perp 
 \end{align*}
 We can now apply Lemma \ref{lemma:metrics_dist_bound} to conclude the H\"older continuity of $F^i(\theta)$, provided we control the divergence of metrics.
 We need to find a $A,C_2$ such that
 $$
 \dist\left(\norm{-}_{g_t^{\theta_1}}, \norm{-}_{g_t^{\theta_2}}\right) \leq A |\theta_1-\theta_2| e^{tC_2}
 $$
 
 By results of Forni (see \cite{Forni_deviations}) the norm of the Kontsevich-Zorich cocycle on a Teichm\"uller disk is bounded by the hyperbolic distance between points.
 Namely, if we $E$ is contained in a $k$-th tensor product of the KZ cocycle, then
 $$
 \norm{A(x,gx)}\leq e^{k\cdot \dist(x,gx)}
 $$
 
 Next, from hyperbolic geometry we have for $t$ bounded away from zero
 $$
 e^{\frac 12 \dist\left(g_t^{\theta_1}x, g_t^{\theta_2} x\right)} \leq C_3 |\theta_1-\theta_2| e^t
 $$
 Plugging in these estimates into Lemma \ref{lemma:metrics_dist_bound}, we get H\"older continuity of $F^i(\theta)$ with exponent $\nu_i = \frac 1{2k} \log \frac{\lambda_{i-1}} {\lambda_i}$.
 
 An analogous result holds for the backward Oseledets filtration.
 Therefore, combining the filtrations using Lemma \ref{lemma:Grass_intersection} we obtain the desired result.
\end{proof}


\def\cprime{$'$}

\bibliographystyle{alpha}

\end{document}